\newtheorem{theorem}{Theorem}[section]
\newtheorem*{theorem*}{Theorem}
\newtheorem{corollary}[theorem] {Corollary}
\newtheorem{definition}[theorem]{Definition}
\newtheorem{example}[theorem]{Example}
\newtheorem{lemma}[theorem]{Lemma}
\newtheorem{proposition}[theorem]{Proposition}
\newtheorem{remark}[theorem]{Remark}
\newcommand\R{\mathbb{R}}
\newcommand\Z{\mathbb{Z}}
\newcommand\C{\mathbb{C}}
\newcommand{\TC}{\mathrm{TC}}
\newcommand{\ct}{\mathrm{cat}}
\newcommand{\sct}{\mathrm{secat}}
\newcommand{\kr}{\mathrm{ker}}
\newcommand{\cd}{\mathrm{cd}}
\newcolumntype{x}[1]{>{\centering\arraybackslash}p{#1}}
\begin{document}
\title[]{Sequential parametrized topological complexity of group epimorphisms}

\author[N. Daundkar]{Navnath Daundkar}
\address{Department of Mathematics, Indian Institute of Science Education and Research Pune, India.}
\email{navnath.daundkar@acads.iiserpune.ac.in}

\thanks{}

\begin{abstract} 
We introduce and study the sequential analogue of Grant's parametrized topological complexity of group epimorphisms, which generalizes the sequential topological complexity  of groups.
We derive bounds for sequential parametrized topological complexity based on the cohomological dimension of certain subgroups, thereby extending the corresponding bounds for sequential topological complexity of groups. We also obtain sequential analogs of (new) lower bounds on parametrized topological complexity of epimorphisms which are recently obtained by Espinosa Baro, Farber, Mescher and Oprea. 
Finally, we utilize these results to provide alternative computations for the sequential parametrized topological complexity of planar Fadell–Neuwirth fibrations.
\end{abstract}
\keywords{Parametrized topological complexity, sectional category, group epimorphisms}
\subjclass[2020]{55M30, 55S40, 55R10, 55P20}
\maketitle

\section{Introduction}\label{sec:intro}
The \emph{topological complexity} of a space $X$, denoted by $\TC(X)$ is defined as the least positive integer $r$ for which $X\times X$ can be covered by open sets $\{U_0,\dots, U_r\}$,  such that each $U_i$ admits a continuous local section of the free path space fibration
\[\pi:X^I\to X\times X ~~ \text{ defined by }~~\pi(\gamma)=(\gamma(0),\gamma(1)), \]
where $X^I$ denotes the free path space of $X$ with a compact open topology. 
Farber \cite{FarberTC} introduced the concept of topological complexity to analyze the difficulty of computing a motion planning algorithm for the configuration space $X$ of a mechanical system. Farber showed that $\TC(X)$ is a numerical homotopy invariant of $X$.
This invariant has been extensively studied over the past two decades.

The sequential analogue of topological complexity was introduced by Rudyak in \cite{RUD2010}, known as the \emph{sequential (or higher) topological complexity}. 
Consider the fibration of path connected space
\[
\pi_n: X^I\to X^n  \text{ defined by }  \pi_n(\gamma)=\bigg(\gamma(0), \gamma(\frac{1}{n-1}),\dots,\gamma(\frac{n-2}{n-1}),\gamma(1)\bigg).   \]
The \emph{sequential topological complexity} of $X$ is the least integer $r$ for which $X^n$ is covered by open sets $\{U_0,\dots, U_r\}$,  such that each $U_i$ admits a continuous local section of $\pi_n$.
Note that by definition, it follows that $\TC_2(X)=\TC(X)$ (see \cite{gonzalezhighertc}). 

The notion of parametrized topological complexity was introduced by Cohen, Farber, and Weinberger in \cite{PTC}, with the sequential analogue by Farber and Paul in \cite{SequentialPTC}. These parametrized motion planning algorithms offer greater universality and flexibility, functioning effectively in diverse scenarios involving external conditions. 
A parametrized motion planning algorithm takes a pair of configurations under identical external conditions and generates a continuous motion of the system that remains unchanged by those conditions.

We now briefly define the sequential parametrized topological complexity. For a fibration $p: E \to B$, 
let $E^n_B$ be the the space of all
$n$-tuples of points in $E$ all of which lie in a common fibre of $p$, and $E^I_B$ the space of all paths in $E$ with images in a single fibre. The fibration $\pi_n$ restricts to the subspace $E^I_B$ of $E^I$, which we denote by $\Pi_n: E^I_B \to E^n_B$. The \emph{sequential parametrized topological complexity} of $p : E \to B$, denoted by $\TC_n[p : E \to B]$, is the smallest integer $r$ such that there exists an open cover ${U_0, \dots, U_r}$ of $E^n_B$ where each $U_i$ admits a continuous section of $\Pi_n$. Note that $\TC_2[p : E \to B]$ is the \emph{parametrized topological complexity} defined by Cohen, Farber, and Weinberger.

There is an invariant called LS category, introduced by Lusternik and Schnirelmann in \cite{LScat}, which is closely related to sequential topological complexity. The LS category of a space $X$, denoted by $\ct(X)$, is the smallest integer $r$ such that there exist and open cover $\{U_0,\dots, U_r\}$ of $X$ where each inclusion $U\hookrightarrow X$ is nullhomotopic.
The authors of \cite{gonzalezhighertc} established the inequality
$$\mathrm{cat}(X^{k-1})\leq \TC_k(X)\leq \mathrm{cat}(X^k).$$
Both sequential topological complexity and LS-category are homotopy invariants, so for aspherical spaces, they depend only on the fundamental group. Specifically, we define $\ct(G) := \ct(K(G,1))$ and $\TC_n(G) := \TC_n(K(G,1))$. This leads to an interesting question of expressing the (sequential) topological complexity of an aspherical space in terms of its fundamental group. The well-known Eilenberg-Ganea theorem, presented in \cite{EilenbergGanea}, provides such a relation for  the LS-category, showing that for any group $G$,  the equality $\ct(G) = \mathrm{cd}(G)$ holds, where $\mathrm{cd}(G)$ is the cohomological dimension of $G$.

The study of the topological complexity of groups has attracted significant attention and has been approached from various perspectives. Alongside specific computations for various example classes, as seen in \cite{CohenPruidze}, \cite{GrantTCArtingropus}, \cite{Dranishnikovnonorsurf}, \cite{Li2022}, \cite{HughesLi2022}, \cite{Dranishnikov2020}, and \cite{TCKleinbottle}, broader methodologies have also been explored. For example, \cite{TCBredon} employs techniques from equivariant topology and Bredon cohomology to establish new bounds for the topological complexity of groups. In \cite{lbTCaspherical}, the authors derived various lower bounds on $\TC(G)$ involving the cohomological dimension of certain subgroups of $G$, while \cite[Proposition 3.7]{grantfibsymm} provides an upper bound for $\TC(G)$. 
Additionally, the authors of \cite{baro2023sequential} present lower bounds for the sequential topological complexity of groups.

Grant \cite{GrantPTC} introduced the notion of parametrized topological complexity for group epimorphisms. For an epimorphism $\alpha: G \to H$, the \emph{parametrized topological complexity of $\alpha$} is defined by $\TC[\alpha: G \to H] := \sct(\Delta: G \to G \times_H G)$, where $\Delta$ is the diagonal homomorphism given by $\Delta(g) = (g,g)$ (see \Cref{def: ptchomo} for details). This notion generalizes the topological complexity of groups (see \cite[Example 3.7]{GrantPTC}). Grant further adapts the parametrized version of results from \cite{lbTCaspherical} and \cite{grantfibsymm} (see \cite[Theorem 4.1 and Theorem 5.1]{GrantPTC}).

In this paper, we introduce the concept of sequential parametrized topological complexity for group epimorphisms, which generalizes the sequential topological complexity of groups (see \Cref{ex: tcnG}). For an epimorphism $\alpha: G \to H$, the \emph{sequential parametrized topological complexity of $\alpha$} is defined by
$$\TC_n[\alpha: G\to H]:=\sct(\Delta_n:G\to G^n_H),$$ 
where $\Delta_n$ is the diagonal homomorphism given by $\Delta_n(g) = (g, \dots, g)$ (see \Cref{def: ptchomo} for details). In \Cref{thm: lbtcnptc}, we establish a lower bound on $\TC_n[\alpha: G \to H]$ in terms of the cohomological dimension of a certain subgroup of $G^n_H$.
\begin{theorem*}
Let $\alpha: G\to H$ be a group epimorphism. Let $L$ be a subgroup of $G^n_H$ such that $L\cap N=\{1\}$ for any subgroup $N$ of $G^n$ with $gNg^{-1}=\Delta_n(G)$ for some $g\in G^n_H$. Then 
\[\TC_n[\alpha: G\to H]\geq \mathrm{cd}(L).\]
\end{theorem*}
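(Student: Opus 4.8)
The plan is to reinterpret $\TC_n[\alpha:G\to H]=\sct(\Delta_n:G\to G^n_H)$ geometrically and then extract the cohomological lower bound by restricting to the subgroup $L$. Writing $Q=G^n_H$ and $Q_0=\Delta_n(G)$, the homomorphism $\Delta_n$ is injective (the first-coordinate projection of $\Delta_n(g)$ recovers $g$), so $Q_0\cong G$ is a subgroup of $Q$ and the induced map of classifying spaces $B\Delta_n\colon K(G,1)\to K(Q,1)$ is, up to homotopy, the fibration
\[
p\colon K(Q_0,1)=EQ\times_Q(Q/Q_0)\longrightarrow K(Q,1)
\]
with \emph{discrete} fibre the coset space $Q/Q_0$, on which $\pi_1(K(Q,1))=Q$ acts by left translation. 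By definition $\sct(\Delta_n)=\sct(p)$, so the first step is to record this identification of the fibre and its monodromy.

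Next I would use monotonicity of sectional category under restriction to a subspace: the inclusion $L\hookrightarrow Q$ induces $K(L,1)\hookrightarrow K(Q,1)$, and pulling $p$ back along it yields a covering
\[
p^{*}\colon EL\times_L(Q/Q_0)\longrightarrow K(L,1)
\]
with fibre $Q/Q_0$ and monodromy the restricted left $L$-action. Since a sectional cover of $K(Q,1)$ restricts to one of $K(L,1)$, one gets $\sct(p)\ge\sct(p^{*})$, so it suffices to prove $\sct(p^{*})\ge\cd(L)$.

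The crux is to compute the $L$-stabilizers of the fibre and show the hypothesis forces the action to be \emph{free}. For a coset $xQ_0\in Q/Q_0$ with $x\in Q$, the stabilizer in $L$ is
\[
\{\,l\in L: lxQ_0=xQ_0\,\}=\{\,l\in L: x^{-1}lx\in Q_0\,\}=L\cap xQ_0x^{-1}.
\]
Setting $N=xQ_0x^{-1}$ and $g=x^{-1}\in G^n_H$ gives $gNg^{-1}=\Delta_n(G)$, so $N$ is exactly one of the subgroups covered by the hypothesis; hence $L\cap N=\{1\}$ and every stabilizer is trivial. Consequently the connected components of the total space $EL\times_L(Q/Q_0)$, indexed by the $L$-orbits, are each of the form $EL/\{1\}=EL$, i.e.\ contractible, and $p^{*}$ restricts on each component to the universal covering $EL\to K(L,1)$.

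Finally I would read off the sectional category. A section of $p^{*}$ over a connected open set $U\subseteq K(L,1)$ is a lift of $U\hookrightarrow K(L,1)$ into some component $EL$, which exists precisely when $\pi_1(U)\to L$ is trivial, that is, when $U$ is null-homotopic in $K(L,1)$. Thus the open sets admitting sections are exactly the categorical ones, giving $\sct(p^{*})=\ct(K(L,1))=\cd(L)$ by the Eilenberg--Ganea theorem. Chaining the inequalities yields $\TC_n[\alpha:G\to H]=\sct(p)\ge\sct(p^{*})=\cd(L)$ (the case $L=\{1\}$ being trivial since then $\cd(L)=0$). I expect the main obstacle to be the bookkeeping in the stabilizer step—correctly identifying $xQ_0x^{-1}$ with a subgroup $N$ satisfying $gNg^{-1}=\Delta_n(G)$ for $g\in G^n_H$, and confirming the monodromy of $p$ is genuinely left translation on $Q/Q_0$—together with the clean justification that a covering whose components are universal covers has sectional category equal to the Lusternik--Schnirelmann category of its base.
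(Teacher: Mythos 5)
Your proposal is correct, and it reaches the bound by a genuinely different implementation than the paper, even though both arguments turn on the same group-theoretic crux. The paper realizes $\alpha$ by a fibration $p:E\to B$, pulls back the parametrized endpoint fibration $\Pi_n: E^I_B\to E^n_B$ along a map $\psi: K(L,1)\to E^n_B$ realizing the inclusion $L\hookrightarrow G^n_H$, and then shows that any open set admitting a local section of the pullback is $1$-categorical by chasing free homotopy classes: via a commutative diagram of sets $[S^1,-]$, a loop in such a set determines a conjugacy class $[(a_1,\dots,a_n)]$ in $L$ that is conjugate in $G^n_H$ into $\Delta_n(G)$, hence lies in $L\cap N$ for some $N$ with $gNg^{-1}=\Delta_n(G)$ and is trivial; combined with $\ct_1(K(L,1))=\ct(K(L,1))=\cd(L)$ (\v{S}varc's Proposition 44 and Eilenberg--Ganea, both cited in the paper), this yields the bound. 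You instead exploit the algebraic definition $\TC_n[\alpha]=\sct(\Delta_n:G\to G^n_H)$ together with the well-definedness of the sectional category of a homomorphism (Grant's Lemma 3.2, quoted as Theorem 3.2 in the paper) to choose the most convenient realizing map: the covering of $K(G^n_H,1)$ with fibre the coset space $G^n_H/\Delta_n(G)$. After pulling back along $K(L,1)\to K(G^n_H,1)$ (your phrase ``restricts to'' should strictly be pullback monotonicity of $\sct$ for fibrations, the paper's Lemma 2.1(b) citation), the hypothesis translates \emph{exactly} into freeness of the $L$-action on the fibre --- your stabilizer computation $L\cap xQ_0x^{-1}$ with $N=xQ_0x^{-1}$, $g=x^{-1}\in G^n_H$ is the same conjugation-into-the-diagonal argument the paper runs on loops --- so the pullback is a disjoint union of universal covers and its sectional category is $\ct_1(K(L,1))=\cd(L)$, which you re-derive via the lifting criterion rather than citing \v{S}varc (the criterion applies since open subsets of CW complexes are locally path-connected). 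What your route buys is a more elementary and self-contained proof that never touches the path fibration $\Pi_n$ or the realizing fibration of $\alpha$, and it makes the role of the hypothesis transparent (it is precisely freeness of the coset action); what the paper's route buys is proximity to Grant's original argument and to the geometric fibration picture, which is the form used elsewhere in the paper (e.g., in the Fadell--Neuwirth computations).
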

\noindent The above theorem serves as a parametrized analogue of \cite[Theorem 2.1]{htcfarberaspherical} by Farber and Oprea. As a consequence, in \Cref{cor:seqanalogue-Grant-lb}, we establish the sequential analogue of Grant's result \cite[Theorem 4.1]{GrantPTC}. Recently, in \cite[Theorem 4.8]{baro2023sequential}, the authors provided a lower bound on $\TC[\alpha: G \to H]$ for type $F$ groups $G$ and $H$. We derive the corresponding sequential analogue in \Cref{thm: lb-subgp-incl}.\begin{theorem*}
Let $\alpha: G\to H$ be a group epimorphism of type F groups. Then 
\[\TC_n[\alpha:G\to H]\geq \mathrm{cd}(G^n_H)-k(\alpha),\] where 
\[k(\alpha)=\mathrm{max}\{\mathrm{cd}(\cap_{i=1}^{n-1} C(k_i))\mid k_i\in \kr(\alpha)~ \& ~ k_i\neq 1_G,~ 1\leq i\leq n-1\}\] and $C(k_i)$ is the centralizer of an element $k_i$ in $G$. 
\end{theorem*}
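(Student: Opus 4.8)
The plan is to deduce the bound from \Cref{thm: lbtcnptc}: it suffices to produce a single subgroup $L\le G^n_H$ satisfying the avoidance hypothesis there, together with the estimate $\cd(L)\ge \cd(G^n_H)-k(\alpha)$. The whole argument thus reduces to a construction of $L$ and a computation of its cohomological dimension, and the first thing I would do is put the avoidance hypothesis into a usable form. A subgroup $N\le G^n$ with $gNg^{-1}=\Delta_n(G)$ for some $g=(g_1,\dots,g_n)\in G^n_H$ is exactly $N=g^{-1}\Delta_n(G)g=\{(g_1^{-1}hg_1,\dots,g_n^{-1}hg_n):h\in G\}$. A tuple $(x_1,\dots,x_n)$ lies in such an $N$ precisely when $g_ix_ig_i^{-1}$ is independent of $i$; since $g\in G^n_H$ forces $g_i\in g_1K$ (here $K=\kr(\alpha)$), writing $g_i=g_1c_i$ with $c_i\in K$ this becomes $x_i=c_i^{-1}x_1c_i$. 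Hence
\[\textstyle\bigcup_{N}N=\{(x_1,\dots,x_n)\in G^n_H:\ x_i\ \text{is }K\text{-conjugate to }x_1,\ 2\le i\le n\},\]
so the hypothesis on $L$ is the \emph{$K$-conjugacy separation} condition: if $(x_1,\dots,x_n)\in L$ and each $x_i$ is conjugate to $x_1$ by an element of $K$, then $x_1=\dots=x_n=1$. I would also record at this stage that, because $G$ is of type $F$, $\cd(G)<\infty$ and cohomological dimensions of subgroups assume only finitely many values, so the maximum defining $k(\alpha)$ is attained by some tuple $(k_2^*,\dots,k_n^*)$; put $C=\bigcap_{i}C(k_i^*)$, so that $\cd(C)=k(\alpha)$.

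With the separation criterion in hand, the construction of $L$ balances two kinds of directions inside $G^n_H$. On one side sits the vertical subgroup $\kr(p_1)=\{(1,x_2,\dots,x_n):x_i\in K\}\cong K^{n-1}$, where $p_1\colon G^n_H\to G$ is the first projection; any tuple in it whose coordinates are all $K$-conjugate to the trivial first coordinate is trivial, so this part is automatically separated. On the other side one must recover as much of the $G$-direction as possible, and here the untwisted diagonal is useless, since $\Delta_n(G)$ lies entirely in $\bigcup_N N$. The elements $k_2^*,\dots,k_n^*$ are to be used to build a \emph{twisted} horizontal copy of a large subgroup of $G$ whose coordinates are pushed out of the $K$-conjugacy class of the first coordinate, while the vertical part is correspondingly trimmed so that separation is preserved. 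The subgroup $C=\bigcap_iC(k_i^*)$ enters as exactly the locus in the $G$-direction along which every admissible twist fails to move a coordinate off the $K$-conjugacy class of $x_1$, which is what forces a deficit of size $\cd(C)=k(\alpha)$ and no more.

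Finally, using that $G$ and $H$ are of type $F$—so that every group in sight has a finite aspherical model and cohomological dimension is controlled through the Künneth formula and the split extension $1\to K^{n-1}\to G^n_H\to G\to 1$—I would verify that the resulting $L$ satisfies $\cd(L)\ge \cd(G^n_H)-\cd(C)$, and then invoke \Cref{thm: lbtcnptc}. The main obstacle is precisely this construction together with its dimension count: the two demands pull against one another, since retaining the full vertical subgroup $K^{n-1}$ already forbids adjoining any nontrivial horizontal element without reintroducing $K$-conjugate tuples, so the twisting has to be calibrated against the centralizers of the $k_i^*$ with some care, and obtaining a sharp lower bound on $\cd(L)$ for the resulting non-product subgroup is where the type $F$ hypothesis is used in an essential way.
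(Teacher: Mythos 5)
Your reformulation of the avoidance hypothesis of \Cref{thm: lbtcnptc} is correct (writing $g=(g_1,\dots,g_n)\in G^n_H$ as $g_i=g_1c_i$ with $c_i\in\kr(\alpha)$ does identify $\bigcup_N N$ with the set of tuples whose coordinates are all $\kr(\alpha)$-conjugate to the first), and your observation that $\kr(p_1)\cong\kr(\alpha)^{n-1}$ is automatically separated is fine. But the proof stops exactly where the work begins: the subgroup $L$ is never constructed, and no argument is given that \emph{any} $L$ satisfying the separation condition has $\cd(L)\geq\cd(G^n_H)-k(\alpha)$ — you concede as much in your final sentence. Moreover, the one concrete mechanism you gesture at fails as stated: a ``twisted'' horizontal copy of the form $\{(h,c_2hc_2^{-1},\dots,c_nhc_n^{-1}):h\in G'\}$ with fixed $c_i\in\kr(\alpha)$ is itself one of the forbidden subgroups $N=g^{-1}\Delta_n(G)g$ (take $g=(1,c_2,\dots,c_n)\in G^n_H$), so conjugation-twisting cannot move a copy of a subgroup of $G$ off the forbidden locus, and no alternative twisting is specified. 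Note finally that the paper itself, in the remark immediately following \Cref{thm: lb-subgp-incl}, poses the relation between the bound of \Cref{thm: lbtcnptc} and the bound you are trying to prove as an open question; so deriving the latter from the former is not a routine reduction but precisely the unresolved comparison.

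The paper's actual proof bypasses \Cref{thm: lbtcnptc} entirely. Since $\Delta_n:G\to G^n_H$ is a subgroup inclusion, it applies the lower bound for sectional categories of subgroup inclusions from \cite[Theorem 3.10]{baro2023sequential}, which gives $\TC_n[\alpha:G\to H]\geq\cd(G^n_H)-\kappa_{G^n_H,\Delta_n(G)}$ with $\kappa_{G^n_H,\Delta_n(G)}=\mathrm{max}\{\cd(\Delta_n(G)\cap g\Delta_n(G)g^{-1})\mid g\in G^n_H\setminus\Delta_n(G)\}$. The remaining content is purely algebraic: by \Cref{lem: alt-decr-fib-prod} every $g\in G^n_H$ factors as $(\bar{k},1)\cdot\Delta_n(h)$ with $\bar{k}\in\kr(\alpha)^{n-1}$, the diagonal factor can be discarded since it normalizes $\Delta_n(G)$, and a direct computation shows $\Delta_n(g)\in(\bar{k},1)\Delta_n(G)(\bar{k},1)^{-1}$ forces $g\in\bigcap_{i=1}^{n-1}C(k_i)$, whence $\kappa_{G^n_H,\Delta_n(G)}=k(\alpha)$. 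To repair your argument you would either have to carry out the missing construction of $L$ with the required dimension count (which would in effect answer the paper's open question), or switch to the subgroup-inclusion bound of \cite{baro2023sequential} as the paper does.
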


In \Cref{thm: sptcub}, we establish the sequential analogue of \cite[Theorem 5.1]{GrantPTC}, which provides an upper bound on $\TC[\alpha: G \to H]$.
\begin{theorem*}
Let $\alpha: G\to H$ be an epimorphism of groups. Suppose for a normal subgroup $N$ of $G$, we have  $[N,\kr(\alpha)]=\{1\}$. Then $\Delta_n(N)$ is a normal subgroup of $G^n_H$, and 
\[\TC_n[\alpha: G\to H]\leq \mathrm{cd}\bigg(\frac{G^n_H}{\Delta_n(N)}\bigg).\]    
\end{theorem*}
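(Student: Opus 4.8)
The plan is to prove the statement in two stages: first establish the normality of $\Delta_n(N)$ in $G^n_H$ by a direct computation exploiting the commutator hypothesis, and then bound the sectional category by reinterpreting $\TC_n[\alpha:G\to H]=\sct(\Delta_n:G\to G^n_H)$ as the sectional category of the fibration associated to the induced map of classifying spaces $B\Delta_n:BG\to B(G^n_H)$, and exhibiting that fibration as a pullback of the corresponding map for the quotient groups. This mirrors, in the sequential setting, Grant's argument for \cite[Theorem 5.1]{GrantPTC}.

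For normality, I would take an arbitrary $(g_1,\dots,g_n)\in G^n_H$ and $(m,\dots,m)\in\Delta_n(N)$ and compute
\[
(g_1,\dots,g_n)(m,\dots,m)(g_1,\dots,g_n)^{-1}=(g_1 m g_1^{-1},\dots,g_n m g_n^{-1}).
\]
Each entry lies in $N$ because $N\trianglelefteq G$, so it remains to see that all entries agree. For indices $i,j$ we have $\alpha(g_i)=\alpha(g_j)$, hence $g_j^{-1}g_i\in\kr(\alpha)$; the hypothesis $[N,\kr(\alpha)]=\{1\}$ then yields $(g_j^{-1}g_i)\,m\,(g_j^{-1}g_i)^{-1}=m$, that is $g_i m g_i^{-1}=g_j m g_j^{-1}$. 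Thus the conjugate lies in $\Delta_n(N)$, so $\Delta_n(N)\trianglelefteq G^n_H$. Note also that $\Delta_n(N)\subseteq\Delta_n(G)$ and, since $N\trianglelefteq G$, that $\Delta_n(N)\trianglelefteq\Delta_n(G)$.

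For the bound, write $K:=\Delta_n(N)$, which is normal in $B:=G^n_H$ and contained in the image of $\Delta_n$, and set $A:=\Delta_n(G)\cong G$. The inclusion $A\hookrightarrow B$ descends to an injection $A/K\cong G/N\hookrightarrow B/K=G^n_H/\Delta_n(N)$, and I would consider the commutative square
\[
\begin{CD}
BG @>{B\Delta_n}>> B(G^n_H) \\
@VVV @VVV \\
B(G/N) @>>> B\big(G^n_H/\Delta_n(N)\big)
\end{CD}
\]
Both vertical maps are fibrations, with fibres $BN$ and $B\Delta_n(N)$ respectively, and the map induced on fibres by $B\Delta_n$ is $B(\Delta_n|_N)$, a homotopy equivalence since $\Delta_n|_N:N\to\Delta_n(N)$ is an isomorphism. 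Hence the square is a homotopy pullback, so $B\Delta_n$ is the pullback of the bottom map along $B(G^n_H)\to B\big(G^n_H/\Delta_n(N)\big)$. Since sectional category does not increase under pullback, and since the sectional category of any fibration is bounded above by the LS-category of its base, I obtain
\[
\TC_n[\alpha:G\to H]=\sct(\Delta_n)\le\sct\big(B(G/N)\to B(G^n_H/\Delta_n(N))\big)\le\ct\big(B(G^n_H/\Delta_n(N))\big)=\cd\Big(\tfrac{G^n_H}{\Delta_n(N)}\Big),
\]
where the final equality is the Eilenberg–Ganea theorem.

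The normality check is routine; the step demanding the most care is verifying that the square above is a genuine homotopy pullback. Concretely, this amounts to checking that the fibre product of $G/N\to G^n_H/\Delta_n(N)\leftarrow G^n_H$ recovers $G$ (it consists of pairs $(bK,b)$ with $b\in G^n_H$ and $bK\in G/N$, forcing $b\in\Delta_n(G)$) and that the resulting pullback space is aspherical, so that $B\Delta_n$ is correctly identified with the pullback of the quotient map. Once this identification is secured, the two standard properties of sectional category---monotonicity under pullback and the bound by the category of the base---together with Eilenberg–Ganea close the argument.
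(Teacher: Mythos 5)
Your proposal is correct, and its second half takes a genuinely different route from the paper's. The normality computation is the same in substance as the paper's (your write-up is in fact cleaner: the paper's version contains typos, writing $\Delta_n(I)$ and ``$g_j^{-1}g_i\in N$'' where $\kr(\alpha)$ is meant, and an unexplained commutator involving $h$). For the upper bound, the paper realizes the extension $1\to N\xrightarrow{\Delta_n|_N} G^n_H\to W\to 1$, with $W=G^n_H/\Delta_n(N)$, by an aspherical fibration $K(N,1)\to K(G^n_H,1)\to K(W,1)$, observes that the fibre inclusion factors through the map $f$ realizing $\Delta_n$, and then invokes \cite[Lemma 2.2]{GrantPTC}, which asserts $\sct(f)\le\ct(B)$ whenever the fibre inclusion of a fibration over $B$ factors through $f$; Eilenberg--Ganea converts $\ct(W)$ to $\cd(W)$. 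You instead exhibit $B\Delta_n$ as a homotopy pullback of $B(G/N)\to B(W)$ along $B(G^n_H)\to B(W)$, justified by the fibre equivalence $BN\simeq B\Delta_n(N)$ and the group-theoretic identification $G/N\times_W G^n_H\cong G$; your parenthetical ``pairs $(bK,b)$'' is slightly garbled --- the elements are pairs $(gN,b)$ with $\bar{\Delta}_n(gN)=b\Delta_n(N)$, forcing $b\in\Delta_n(gN)\subseteq\Delta_n(G)$ --- but the computation you indicate is the right one and goes through, and asphericity of the pullback follows from the long exact sequence since all spaces involved are aspherical. Then monotonicity of sectional category under pullback of fibrations plus $\sct\le\ct$ of the codomain close the argument. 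Both proofs hinge on the same fibration $K(G^n_H,1)\to K(W,1)$ with fibre $K(N,1)$; the paper's is shorter because Grant's lemma packages the geometry once and for all, while yours is self-contained modulo standard homotopy-pullback facts and buys the sharper intermediate inequality $\TC_n[\alpha:G\to H]\le\sct\big(\bar{\Delta}_n:G/N\hookrightarrow G^n_H/\Delta_n(N)\big)$, the sectional category of a subgroup inclusion, which you then coarsen to $\cd(W)$. Two points to make explicit when writing this up: replace $B(G/N)\to B(W)$ by a fibration before pulling back (secat monotonicity under pullback, as in \cite[Lemma 2.1]{GrantPTC}, is stated for fibrations), and note that identifying $\sct(\Delta_n:G\to G^n_H)$ with the sectional category of the resulting pullback fibration uses the realization-independence of \cite[Lemma 3.2]{GrantPTC}, since the aspherical pullback total space realizes $\Delta_n$ exactly as you verify.
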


Fadell–Neuwirth fibrations are used as models for motion planning problems in Euclidean space involving
multiple agents with unknown initial position avoiding collisions with both each other and obstacles. The parametrized topological complexity of these fibrations is computed in \cite{PTC, PTCcolfree}, while the sequential parametrized topological complexity is computed in \cite{SequentialPTC}. In \cite[Theorem 6.2]{GrantPTC}, Grant provided an alternative computation of the parametrized topological complexity of planar Fadell–Neuwirth fibrations using the parametrized topological complexity of epimorphisms. We aim to derive the sequential analogue of \cite[Theorem 6.2]{GrantPTC}. For a planar Fadell–Neuwirth fibration (see \Cref{def:F-N -fibrations}), we emulate Grant's strategy to obtain the following sequential analogue of \cite[Theorem 6.2]{GrantPTC}:
\begin{theorem*}
Let $p:E\to B$ be a planar Fadell-Neuwirth fibration.
Then $$\TC_n[p:E\to B]=nt+s-2.$$
\end{theorem*}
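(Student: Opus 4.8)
The plan is to prove this via a double inequality, combining the upper bound from the general theory with a matching lower bound coming from cohomological dimension. Let me think about the structure of a planar Fadell–Neuwirth fibration. Such a fibration $p: E \to B$ arises from configuration spaces: $B$ is the configuration space of $s$ ordered points in the plane (or plane minus some points), and $E$ is the configuration space of $s+t$ points, with $p$ forgetting the last $t$ points. The fibre is the configuration space of $t$ points in the plane minus $s$ points, which is a $K(\pi,1)$ for the relevant pure braid-type group. The key numerical invariants are $s$ (the number of "obstacle"/base points) and $t$ (the number of moving agents), and the target answer $nt + s - 2$ should match what Farber–Paul computed in \cite{SequentialPTC}.

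First I would identify the group epimorphism $\alpha: G \to H$ associated to the fibration, where $G = \pi_1(E)$ and $H = \pi_1(B)$, so that $\TC_n[p: E \to B] = \TC_n[\alpha: G \to H]$; this identification requires that $E$, $B$, and the fibres be aspherical, which holds for Fadell–Neuwirth fibrations since configuration spaces of the plane (minus points) are aspherical. I would then invoke \Cref{def:F-N -fibrations} to pin down $G$ and $H$ explicitly as pure braid-type groups and compute the cohomological dimension of $G^n_H$, which I expect to equal $nt + s - 2$ or to feed directly into the bounds.

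For the upper bound I would apply the sequential analogue of Grant's upper bound theorem (the third displayed theorem, $\TC_n[\alpha: G \to H] \leq \cd(G^n_H / \Delta_n(N))$) with a judiciously chosen normal subgroup $N$ satisfying $[N, \kr(\alpha)] = \{1\}$. The natural candidate for $N$ is the "vertical" part corresponding to the fibre directions that commute with the kernel; choosing $N$ to be the center or an appropriate free abelian normal subgroup arising from the planar configuration structure should make the quotient $G^n_H / \Delta_n(N)$ have cohomological dimension exactly $nt + s - 2$. For the lower bound I would apply \Cref{thm: lb-subgp-incl}, giving $\TC_n[\alpha: G \to H] \geq \cd(G^n_H) - k(\alpha)$, and then compute both $\cd(G^n_H)$ and the correction term $k(\alpha) = \max\{\cd(\cap_{i=1}^{n-1} C(k_i))\}$ explicitly for the Fadell–Neuwirth groups, where the centralizers of nontrivial kernel elements in these braid-type groups are well-understood.

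The main obstacle will be the explicit computation of $\cd(G^n_H)$ and the centralizer intersection term $k(\alpha)$ for the specific pure-braid-type groups arising from planar Fadell–Neuwirth fibrations; these require detailed knowledge of the group-theoretic structure, in particular a presentation or fibration-theoretic description of $G^n_H$ as an iterated extension, together with control over which elements lie in $\kr(\alpha)$ and the cohomological dimension of their common centralizers. I anticipate that $\kr(\alpha)$ is a surface-type or free group (the fibre $\pi_1$), that centralizers of its nontrivial elements are cyclic or of small cohomological dimension, and hence that $k(\alpha)$ equals a controlled quantity making the lower bound $\cd(G^n_H) - k(\alpha)$ coincide with the upper bound. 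Matching the two bounds at the value $nt + s - 2$, and thereby confirming consistency with the Farber–Paul computation in \cite{SequentialPTC}, would complete the proof.
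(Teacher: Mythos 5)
Your upper bound is exactly the paper's argument: apply \Cref{thm: sptcub} with $N=Z$ the centre of $P_{s+t}$ (centrality gives $[Z,\kr(\alpha)]=\{1\}$ for free), and compute $\cd\big({}_{P_s}P_{s+t}^{n}/\Delta_n(Z)\big)=nt+s-2$ by iterating the Fadell--Neuwirth extensions with fibre the duality group $\bar{P}_{t,s}$ of dimension $t$ and using \Cref{lemms: cd}. That half of your plan is sound, modulo carrying out the induction you only gesture at.

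The lower bound, however, has a genuine gap: \Cref{thm: lb-subgp-incl} cannot reach $nt+s-2$ here, because your anticipation that centralizers of nontrivial kernel elements are ``cyclic or of small cohomological dimension'' is false in pure braid groups. First, $\cd(G^n_H)=\cd(P_{s+t})+(n-1)\cd(\bar{P}_{t,s})=nt+s-1$ by \Cref{lem: fib-prod-semidirect}, so you would need $k(\alpha)\leq 1$. But taking all $k_i$ equal to a single nontrivial $k\in\kr(\alpha)$ shows $k(\alpha)\geq \cd(C(k))$, and these centralizers are large: for $t\geq 2$ the generator $A_{s+1,s+2}\in\kr(\alpha)$ commutes with the entire copy of $P_s$ on the first $s$ strands (unlinked intervals), so $\cd(C(k))\geq s\geq 2$; even in the minimal case $s=2$, $t=1$, where $P_3\cong F_2\times\Z$, the centralizer of a generator of the free factor is $\Z^2$, giving $k(\alpha)=2$ and a bound of $nt-1<nt$. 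So this route undershoots by at least one in every case. The paper instead derives the lower bound from \Cref{cor: cd-lb} (a consequence of \Cref{thm: lbtcnptc}): it takes $A_1=A$ the free abelian subgroup of rank $s+t-1$ generated by the braids $\phi_j$, $A_2=B\cong P_{s+t-1}$ embedded by adding a trivial last strand, and $A_i=P_{s+t}$ for $3\leq i\leq n$; the linking-number argument of \cite{lbTCaspherical} gives $hAh^{-1}\cap B=\{1\}$ for all $h\in P_{s+t}$, verifying the hypothesis of \Cref{cor: cd-lb}, and an induction with \Cref{lemms: cd} on iterated pullback extensions shows $\cd\big({}_{P_s}\prod_{i=1}^n A_i\big)=nt+s-2$. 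Replacing your lower-bound step by this argument (or some other application of \Cref{thm: lbtcnptc} with a subgroup $L$ of cohomological dimension $nt+s-2$) is necessary to close the proof.
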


\subsection{Structure of the paper}
In \Cref{sec: sptc}, we establish a relation between the sequential parametrized topological complexity of a fibration and its pullback (see \Cref{lem: sptc pullback}) and prove that fibre-preserving homotopy equivalent fibrations (\Cref{def: gen fib hteq}) have the same sequential parametrized topological complexity.
In \Cref{sec: sptcge}, we define the sequential parametrized topological complexity of a group epimorphism and show that it matches that of the corresponding realized fibration (see \Cref{thm: sptcc of realized fib}).
In \Cref{sec: bounds}, we derive a lower bound on the sequential parametrized topological complexity of a group epimorphism in terms of cohomological dimensions of certain subgroups (see \Cref{thm: lbtcnptc}) and an upper bound (see \Cref{thm: sptcub}). We also prove a sequential analogue (see \Cref{thm: lb-subgp-incl}) of the lower bound on parametrized topological complexity from \cite{baro2023sequential}.
Finally, in \Cref{sec: sptcFNfibrations}, we provide an alternate proof for the computations of the sequential parametrized topological complexity of planar Fadell–Neuwirth fibrations.

\section{Sequential parametrized topological complexity}\label{sec: sptc}
In this section, we define sequential parametrized topological complexity and examine its behavior under pullbacks. We then show that fibrations that are fibrewise homotopy equivalent have the same sequential parametrized topological complexity.
\begin{definition}
Given continuous maps $p_i:E_i\to B$, define the generalized fibre product, denoted by  $_{B}\prod_{i=1}^{n}E_i$ as 
\[_{B}\prod_{i=1}^{n}E_i:=\{(e_1,\dots,e_n)\in \prod_{i=1}^nE_i \mid p_i(e_s)=p_j(e_t) ~\text{for}~ 1\leq i\neq j\leq n,~ 1\leq s\neq t\leq n\}.\]    
\end{definition}

If $E_i=E$ and $p_i=p$ for all $1\leq i\leq n$, then we denote  $_{B}\prod_{i=1}^{n}E_i$ by $E^n_B$.
\begin{remark}\label{rmk: gen fibprod}
Observe that the fibre product  $(_{B}\prod_{i=1}^{n-1}E_i)\times_{B} E_n$ for $\beta_1=p_1\circ  \prod_{i=1}^{n-1}pr_i:(_{B}\prod_{i=1}^{n-1}E_i)\to B$ and $\alpha_n:E_n\to B$ coincides with the generalized fibred product $_{B}\prod_{i=1}^nE_i$. The map $pr_i$ is the projection onto the $i$-th factor. The following commutative diagram describes how the generalized fibre product is obtained.
\[\begin{tikzcd}
_{B}\prod_{i=1}^nE_i \arrow{r}{} \arrow[swap]{d}{} & E_n \arrow{d}{p_n} \\%
(_{B}\prod_{i=1}^{n-1}E_i) \arrow{r}{\beta_1}&B. 
\end{tikzcd}\]
\end{remark}

We will now recall the definitions of sectional category and sequential parametrized topological complexity.
Let $f: X\to Y$ be a map.
The \emph{sectional category} of $f$ is  denoted  by $\sct(f)$, is defined as the least integer $r$ for which  $Y$ has an open cover $\{W_0,\dots,W_r\}$ such that each open set admits a continuous homotopy section $s_i:W_i\to X$ for $0\leq i\leq r$.
If $f: X\to Y$ is a fibration, then $\sct(f)$ coincides with its \emph{Schwarz genus}, denoted $\mathrm{gen}(f)$ (refer to \cite{Svarc61}).
For example,  $\TC_n(X)=\mathrm{gen}(\pi_n)$ and $\ct(X)=\sct(\iota: \ast\xhookrightarrow{} X)$. 

For a fibration $p:E\to B$ consider a subspace of $E^I$ defined as
\[E^I_B:=\{\gamma: [0,1]\to E \mid p\circ \gamma(t)=b ~\text{for some}~ b\in B\}.\]
Define the map $\Pi_n: E^I_B\to E^n_B$ by 
\begin{equation}\label{eq: Pin}
\Pi_n(\gamma):= \bigg(\gamma(0), \gamma(\frac{1}{n-1}),\dots,\gamma(\frac{n-2}{n-1}),\gamma(1)\bigg).   
\end{equation}
It follows from the proposition proved in the appendix of \cite{PTC} that the map $\Pi_n$ is a fibration.
The fibration $\Pi_n$ is called the \emph{parametrised endpoint fibration}.

\begin{definition}[{\cite{SequentialPTC}}]
Let $p:E\to B$ be a fibration. Then
the $n$-th sequential parametrized topological complexity of $p$ is denoted by $\TC_n[p:E\to B]$, and defined as \[\TC_n[p:E\to B]:=\sct(\Pi_n).\]   
\end{definition}
 
Let $\Delta_n: E\to E^n_B$ be the diagonal map. We note that the $\sct(\Delta_n: E\to E^n_B)=\sct(\Pi_n)$. Therefore, we have $\TC_n[p:E\to B]=\sct(\Delta_n: E\to E^n_B)$.

The relationship between the parametrized topological complexity of a fibration and its pullback was established by Grant in \cite[Lemma 2.4]{GrantPTC}. The following result provides the sequential analogue.
\begin{lemma}\label{lem: sptc pullback}
Let $p:E\to B$ be a fibration and let $\tilde{p}: \tilde{E}\to \tilde{B}$ be a pullback fibration corresponding to a map $\tilde{B}\to B$.
Then \[\TC_n[\tilde{p}:\tilde{E}\to \tilde{B} ]\leq \TC_n[p:E\to B].\]
\end{lemma}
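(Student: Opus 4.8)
The plan is to exhibit $\tilde\Pi_n$ as a pullback of $\Pi_n$ and then invoke the standard fact that the Schwarz genus of a fibration does not increase when one pulls it back. Writing $f:\tilde B\to B$ for the map classifying the pullback, recall that
\[
\tilde E=\{(\tilde b,e)\in\tilde B\times E\mid f(\tilde b)=p(e)\},\qquad \tilde p(\tilde b,e)=\tilde b,
\]
and that the second projection $\bar f:\tilde E\to E$, $\bar f(\tilde b,e)=e$, is a fibrewise map covering $f$, i.e.\ $p\circ\bar f=f\circ\tilde p$. First I would record the two maps that $\bar f$ induces on the fibrewise configuration and path spaces. Since $\tilde p(e_i)$ is independent of $i$ for any tuple in $\tilde E^n_{\tilde B}$, so is $p(\bar f(e_i))=f(\tilde p(e_i))$, so applying $\bar f$ coordinatewise gives a well-defined map $\bar f_n:\tilde E^n_{\tilde B}\to E^n_B$; and post-composition gives $\bar f^I:\tilde E^I_{\tilde B}\to E^I_B$, $\gamma\mapsto\bar f\circ\gamma$. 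Together with the parametrized endpoint fibrations these fit into the commutative square
\[
\begin{CD}
\tilde E^I_{\tilde B} @>\bar f^I>> E^I_B \\
@V\tilde\Pi_nVV @VV\Pi_nV \\
\tilde E^n_{\tilde B} @>\bar f_n>> E^n_B.
\end{CD}
\]

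The key step, and the one I expect to carry the real content, is to check that this square is cartesian, i.e.\ that the canonical map $\tilde E^I_{\tilde B}\to \tilde E^n_{\tilde B}\times_{E^n_B}E^I_B$ is a homeomorphism. Injectivity, surjectivity, and continuity of the inverse all come from the same explicit construction: given a tuple $(e_1,\dots,e_n)\in\tilde E^n_{\tilde B}$ lying over a common point $\tilde b\in\tilde B$ and a path $\delta\in E^I_B$ with $\Pi_n(\delta)=\bar f_n(e_1,\dots,e_n)$, each $e_i$ has the form $(\tilde b,x_i)$ with $x_i=\bar f(e_i)=\delta(\tfrac{i-1}{n-1})$, and $\delta$ lies in the fibre $p^{-1}(b)$ where $b=f(\tilde b)$; hence $t\mapsto(\tilde b,\delta(t))$ defines a path in $\tilde E^I_{\tilde B}$ that maps to $(e_1,\dots,e_n)$ under $\tilde\Pi_n$ and to $\delta$ under $\bar f^I$, and this assignment is inverse to the canonical map. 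The point to watch is that the fibrewise conditions defining $\tilde E$ are exactly what force $(\tilde b,\delta(t))\in\tilde E$ for every $t$, and that this reconstruction is continuous.

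Finally I would conclude. Because $\Pi_n$ and $\tilde\Pi_n$ are fibrations, their sectional categories coincide with their Schwarz genera, and for fibrations the genus is non-increasing under pullback: pulling an open cover $\{U_0,\dots,U_r\}$ of $E^n_B$ admitting sections $s_i$ of $\Pi_n$ back along $\bar f_n$ yields the open cover $\{\bar f_n^{-1}(U_i)\}$ of $\tilde E^n_{\tilde B}$, and the universal property of the cartesian square promotes each $s_i$ to a section $\tilde s_i(y)=(y,s_i(\bar f_n(y)))$ of $\tilde\Pi_n$ over $\bar f_n^{-1}(U_i)$, using that $\Pi_n\circ s_i$ is the inclusion. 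Taking $r=\sct(\Pi_n)=\TC_n[p:E\to B]$ then gives $\TC_n[\tilde p:\tilde E\to\tilde B]=\sct(\tilde\Pi_n)\le r$, as required.
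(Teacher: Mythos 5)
Your proof is correct and follows essentially the same route as the paper: identify the parametrized endpoint fibration $\tilde\Pi_n$ of $\tilde p$ with the pullback of $\Pi_n$ along the induced map $\tilde E^n_{\tilde B}\to E^n_B$, then use that the Schwarz genus of a fibration does not increase under pullback. The only difference is that you verify explicitly the cartesian property (and the lifting of sections) that the paper states as an observation, and your verification is sound.
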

\begin{proof}
Consider the following diagram  of a pullback of fibrations
\[ \begin{tikzcd}
\tilde{E} \arrow{r}{f'} \arrow[swap]{d}{\tilde{p}} & E \arrow{d}{p} \\%
\tilde{B} \arrow{r}{f}&B.\\%
& 
\end{tikzcd}
\]
Note that $f'$ induces a map $\tilde{f'}: \tilde{E}^n_{\tilde{B}}\to E^n_B$.
Now observe that the pullback of $\Pi_n: E^I_B\to E^n_B$ along $\tilde{f'}$ is isomorphic as a fibration to $\Phi:\tilde{E}^I_{\tilde{B}}\to \tilde{E}^n_{\tilde{B}}$. Thus, \[\TC_n[\tilde{p}:\tilde{E}\to \tilde{B}] =  \sct(\Phi)\leq \sct(\Pi_n)=\TC_n[p: E\to B].\qedhere \]
\end{proof}

Before we proceed to establish one of the important aims of this section: the fibre-preserving homotopy invariance of the sequential parametrized topological complexity, we will recall some definitions from \cite{GrantPTC}.
\begin{definition}\label{def: fib hteq}\
 \begin{enumerate}
\item Let $p:E\to B$ and $q: X\to B$ be two fibrations. 
Then a fibrewise map from $p:E\to B$ to $q: X\to B$ is a map $f:E\to X$ such that $q\circ f=p$.

\item A fibrewise homotopy $F:E\times I\to X$ is a map such that $q(F(-,t))=p$ for all $t\in I$. Thus, $F$ is a homotopy between fibrewise maps $F(-,0)$ and $F(-,1)$.

\item Let $p:E\to B$ and $q: X\to B$ be two fibrations. Then $p$ and $q$ are called fibrewise homotopy equivalent, if there are fibrewise maps  $f: E\to X$ and $g:X\to E$ such that there are fibrewise homotopies  from $f\circ g$ to $Id_{X}$ and $g\circ f$ to $Id_{E}$, respectively.
\end{enumerate}   
\end{definition}

Suppose $f : E \to X$ is fibrewise map which is also a homotopy equivalence. Then it is well known that $f$ is a fibrewise homotopy equivalence.

It was shown by Farber and Paul \cite{SequentialPTC} that the sequential parametrized topological complexity of fibrewise homotopy equivalent fibrations over same base coincides.
Following the approach in \cite{GrantPTC}, we aim to generalize this result to fibrations over different base spaces.
We first recall some definitions and notation from \cite{GrantPTC}.
\begin{definition}\label{def: gen fib hteq}
 Let $p: E\to B$ and $q: X\to Y$ be two fibrations. 
\begin{enumerate}
\item A pair of maps $f:  B\to Y$ and $f':E\to X$ is called a fibre-preserving map from $p$ to $q$, if the following diagram commutes:
\[ \begin{tikzcd}
E \arrow{r}{f'} \arrow[swap]{d}{q} & X \arrow{d}{p} \\%
B \arrow{r}{f}&Y.
\end{tikzcd}
\] We denote such a pair by $(f',f):p\to q$.

\item A pair of homotopies $F':E\times I\to X $ and $F:B\times I\to Y$ is called a fibre-preserving homotopy if the following diagram is commutative:
\[ \begin{tikzcd}
E\times I \arrow{r}{F'} \arrow[swap]{d}{q} & X \arrow{d}{p} \\%
B\times I \arrow{r}{F}&Y.
\end{tikzcd}
\]
Denote such a pair by $(F',F)$. Note that $(F',F)$ is a fibre-preserving homotopy between $(f_0',f_0):p\to q$ and $(f_1',f_1):p\to q$.
\item  The fibrations $p$ and $q$ are said to be fibre-preserving homotopy equivalent if there exists two pairs of fibre-preserving maps, $(f',f): p\to q$ and $(g',g):q\to p$ such that there are a fibre-preserving homotopies from $(g'\circ f', g\circ g)$ to $(Id_E,Id_B)$  and from $(f'\circ g', f\circ g)$ to $(Id_X,Id_Y)$, respectively.
\end{enumerate}
\end{definition}

The fibre-preserving homotopy invairnace of the parametrized topological complexity of fibrations over different bases was establised by Grant in \cite[Proposition 2.6]{GrantPTC}. We will now prove the sequetial analogue of \cite[Proposition 2.6]{GrantPTC}.
\begin{proposition}
Let $p:E\to B$ and $q:X\to Y$ be fibrations. If there are homotopy equivalences $f'$ and $f$ such that the following diagram commutes
\[ \begin{tikzcd}
E \arrow{r}{f'} \arrow[swap]{d}{p} & X \arrow{d}{q} \\%
B \arrow{r}{f}&Y,
\end{tikzcd}\]
then $\TC_n[p:E\to B]=\TC_n[q:X\to Y]$.
\end{proposition}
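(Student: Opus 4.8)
The plan is to reduce the statement to two ingredients already available in the excerpt: the pullback inequality of \Cref{lem: sptc pullback}, and the fact that a fibrewise map which is a homotopy equivalence is automatically a fibrewise homotopy equivalence (stated just before \Cref{def: gen fib hteq}). The commuting square with $f'$ and $f$ both homotopy equivalences exhibits $(f',f):p\to q$ as a fibre-preserving map, so the strategy is to show each of $\TC_n[p]$ and $\TC_n[q]$ bounds the other, and then invoke antisymmetry.

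First I would form the pullback fibration $\hat{p}:\hat{E}\to B$ of $q:X\to Y$ along $f:B\to Y$. Since $f$ is a homotopy equivalence, this pullback is fibrewise homotopy equivalent over $B$ to $q$ in the appropriate sense, and \Cref{lem: sptc pullback} applied to the square defining $\hat p$ gives $\TC_n[\hat p:\hat E\to B]\leq \TC_n[q:X\to Y]$. The universal property of the pullback, together with the fact that $q\circ f'=f\circ p$, produces a canonical map $\theta:E\to\hat E$ over $B$ (covering $\mathrm{Id}_B$); because $f'$ and $f$ are homotopy equivalences, $\theta$ is a homotopy equivalence, hence a map of fibrations over the common base $B$ which is a homotopy equivalence. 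By the cited principle this makes $\theta$ a fibrewise homotopy equivalence, and by the Farber--Paul result (fibrewise homotopy invariance of sequential parametrized topological complexity over a fixed base) we get $\TC_n[p:E\to B]=\TC_n[\hat p:\hat E\to B]$. Combining, $\TC_n[p:E\to B]\leq \TC_n[q:X\to Y]$.

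For the reverse inequality I would run the symmetric argument. Since $f$ and $f'$ are homotopy equivalences, one obtains homotopy inverses together with a fibre-preserving pair $(g',g):q\to p$ with $g$ a homotopy inverse of $f$; concretely, pull $p:E\to B$ back along $g:Y\to B$ and repeat the construction to conclude $\TC_n[q:X\to Y]\leq \TC_n[p:E\to B]$. With both inequalities in hand the desired equality $\TC_n[p:E\to B]=\TC_n[q:X\to Y]$ follows.

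The main obstacle I anticipate is the middle step: verifying that the comparison map $\theta:E\to\hat E$ into the pullback really is a homotopy equivalence, and that the relevant square genuinely presents $p$ as fibrewise homotopy equivalent to the pullback of $q$ \emph{over the same base}, so that the fixed-base invariance result legitimately applies. This is where one must use that $f'$ is a homotopy equivalence (not merely continuous) and appeal carefully to the standard fact that a fibrewise map between fibrations which is an ordinary homotopy equivalence is a fibrewise homotopy equivalence; the rest is bookkeeping with pullbacks and the already-established \Cref{lem: sptc pullback}.
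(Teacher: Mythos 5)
Your proposal is correct, but it takes a genuinely different route from the paper's. The paper argues directly at the level of the parametrized endpoint fibrations: it uses the proposition on p.~53 of \cite{JPMay} to upgrade $(f',f)$ to a fibre-preserving homotopy equivalence $p\to q$, then checks that $(f',f)$ and its inverse pair induce fibre-preserving maps $(\tilde f',\tilde f)\colon \Pi_n\to\Pi_n'$ between $E^I_B\to E^n_B$ and $X^I_Y\to X^n_Y$ with explicit induced fibre-preserving homotopies, and concludes that $\tilde f'$ and $\tilde f$ are homotopy equivalences, so $\sct(\Pi_n)=\sct(\Pi_n')$ by parts (d) and (e) of \cite[Lemma 2.1]{GrantPTC}. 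You instead factor the comparison through the pullback $\hat p=f^{*}q$ over $B$: \Cref{lem: sptc pullback} gives $\TC_n[\hat p]\leq \TC_n[q]$, the canonical map $\theta\colon E\to\hat E$ over $B$ is a homotopy equivalence by two-out-of-three (using the standard fact that the pullback projection $\hat E\to X$ is a homotopy equivalence when $f$ is, since $q$ is a fibration), Dold's theorem makes $\theta$ a fibrewise homotopy equivalence over $B$, and the fixed-base invariance of \cite{SequentialPTC} yields $\TC_n[p]=\TC_n[\hat p]$; the symmetric argument gives the reverse inequality. Your route is more modular -- it avoids any explicit construction on path spaces by delegating all the homotopy-theoretic work to three black boxes already available in the text -- while the paper's route is self-contained at the level of $\Pi_n$ and transparently parallels Grant's proof of \cite[Proposition 2.6]{GrantPTC}. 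Two caveats: your phrase that the pullback is ``fibrewise homotopy equivalent over $B$ to $q$'' does not parse, since $q$ lives over $Y$ (the correct statement is the one about the projection $\hat E\to X$ just mentioned, and fortunately your argument only uses that); and for the reverse inequality the existence of the fibre-preserving pair $(g',g)\colon q\to p$ with $g$ a homotopy inverse of $f$ is not free -- it requires either the May p.~53 proposition the paper cites or a homotopy-lifting deformation of a homotopy inverse of $f'$ so that it strictly covers $g$ -- so your appeal to it should be made explicit; with that, both proofs ultimately rest on the same standard input.
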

\begin{proof}
The proof is essentially identical to that of \cite[Proposition 2.6]{GrantPTC}, with Grant's argument adapted to the sequential setting.
Since both $f':E\to X$ and $f: B\to Y$ are homotopy equivalences,  it follows from the proposition on page number $53$ of \cite{JPMay} that the pair $(f', f)$ is a fibre-preserving homotopy equivalence. Therefore, there exist a fibre-preserving map $(g',g):p\to q$ and fibre-preserving homotopies from $(g'\circ f', g\circ g)$ to $(Id_E,Id_B)$  and from $(f'\circ g', f\circ g)$ to $(Id_X,Id_Y)$, respectively.

Let $\Pi_n:E^I_B\to E^n_B$ and $\Pi_n':X^I_Y\to X^n_Y$ be fibrations defined as in \eqref{eq: Pin}. Observe that $(f',f)$ and $(g',g)$ induces fibre-preserving maps $(\tilde{f'},\tilde{f}):\Pi_n\to \Pi_n'$  and $(\tilde{g'},\tilde{g}): \Pi_n'\to \Pi_n$, respectively.
More specifically we have the following commutative diagram:
\[ \begin{tikzcd}
E^I_B \arrow{r}{\tilde{f'}} \arrow[swap]{d}{\Pi_n} & X^I_Y \arrow{d}{\Pi'_n} \arrow{r}{\tilde{g'}} & E^I_B \arrow{d}{\Pi_n} \\%
E^n_B \arrow{r}{\tilde{f}}&X^n_Y \arrow{r}{\tilde{g} } & E^n_B,
\end{tikzcd}
\]
where the maps in the commutative diagram are defined as follows: $\tilde{f'}(\gamma)=f'(\gamma)$ for $\gamma\in E^I_B$, $\tilde{f}(e_1,\dots,e_n)=(f(e_1),\dots,f(e_n))$, $\tilde{g'}$ and $\tilde{g}$ are defined similarly. 
Our aim is to show $\tilde{f'}$ and $\tilde{f}$ are homotopy equivalences. Then our claim will follow using part-$(d)$ and part-$(e)$ of \cite[Lemma 2.1]{GrantPTC}. 

Let $(F',F)$ be a fibre-preserving homotopy from $(g'\circ f', g\circ g)$ to $(Id_E,Id_B)$. 
Define the fibre-preserving homotopy $(\tilde{F'},\tilde{F})$ by 
\[\tilde{F'}(\gamma,t)(s)=F'(\gamma(s),t) ~\text{and}~ \tilde{F}(e_1,\dots,e_n,t)=(F(e_1,t),\dots,F(e_n,t)).\]
Observe that $(\tilde{F'},\tilde{F})$ is a fibre-preserving homotopy from $(\tilde{g'}\circ \tilde{f'}, \tilde{g}\circ \tilde{f})$ to $(Id_{E^I_B}, Id_{E^n_B})$. 
Similarly, we can produce a fibre-preserving homotopy from  $(\tilde{f'}\circ \tilde{g'}, \tilde{f}\circ \tilde{g})$ to $(Id_{X^I_Y}, Id_{X^n_Y})$.
This implies $\tilde{f'}$ and $\tilde{f}$ are homotopy equivalences. This concludes the proof.
\end{proof}

\section{Sequential parametrized topological complexity of group epimorphisms}\label{sec: sptcge}
The parametrized topological complexity of group epimorphisms of discrete groups was introduced by Grant in \cite{GrantPTC}. In this section, we introduce a sequential analogue of  this concept.
We prove that the sequential parametrized topological complexity of group epimorphisms coincides with the  sequential parametrized topological complexity of the fibrations which realize them. For a group epimorhism, we show that a  lower bound on sequential parametrized topological complexity can be given as the sequential topological complexity of its kernel.

\begin{definition}[{\cite{GrantPTC}}]
Let $A$ and $B$ be pointed spaces and $\phi:A\to B$ be a pointed map. Then we say that $\phi$ realizes the homomorphism $\alpha: H'\to H$ of groups, if both $A$ and $B$ are $K(H',1)$ and $K(H,1)$ spaces, respectively and $\phi_{\ast}=\alpha:\pi_1(A)\to \pi_1(B)$.
\end{definition}

The following result enables the definition of the sectional category of group homomorphisms.

\begin{theorem}[{\cite[Lemma 3.2]{GrantPTC}}]
Let $H$, $H'$ be two discrete groups and let $\alpha: H'\to H$ be a homomorphism of groups. Then $\alpha$ can be realized by a pointed map. Suppose $\alpha$ is realized by two such maps, then their sectional category coincides.   
\end{theorem}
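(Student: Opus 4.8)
The plan is to split the statement into its two assertions: existence of a realization, and the independence of the sectional category from the chosen realization.

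For the existence, I would invoke the standard theory of Eilenberg--MacLane spaces. For any discrete group $G$ there is a $K(G,1)$, realized for instance as the classifying space $BG$ or as an aspherical CW complex, and any two such spaces are homotopy equivalent. The engine of the whole argument is the natural bijection between pointed homotopy classes of maps and group homomorphisms,
\[ [K(H',1), K(H,1)]_* \;\cong\; \mathrm{Hom}(H', H), \]
which sends a map to the homomorphism it induces on $\pi_1$. Fixing any $A = K(H',1)$ and $B = K(H,1)$ (together with the identifications $\pi_1(A)\cong H'$ and $\pi_1(B)\cong H$ implicit in the definition), the preimage of $\alpha$ under this bijection is a pointed map $\phi: A \to B$ with $\phi_* = \alpha$, giving the desired realization.

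For the independence, suppose $\phi: A \to B$ and $\psi: A' \to B'$ both realize $\alpha$, with $A, A'$ of type $K(H',1)$ and $B, B'$ of type $K(H,1)$. Using the bijection above again, I would pick the pointed maps $a: A \to A'$ and $b: B \to B'$ realizing $\mathrm{id}_{H'}$ and $\mathrm{id}_{H}$; these are weak equivalences of aspherical CW complexes, hence homotopy equivalences by Whitehead's theorem. The two composites $\psi \circ a$ and $b \circ \phi$ from $A$ to $B'$ then induce the same homomorphism on $\pi_1$ (both equal to the image of $\alpha$ under the identifications), so by the uniqueness half of the bijection they are pointed homotopic, $\psi \circ a \simeq b \circ \phi$.

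Finally, I would feed this homotopy-commutative square into the formal properties of sectional category. Using that $\sct$ is a homotopy invariant and is unchanged under pre- or post-composition with a homotopy equivalence (parts (d) and (e) of \cite[Lemma 2.1]{GrantPTC}), I obtain
\[ \sct(\psi) = \sct(\psi \circ a) = \sct(b \circ \phi) = \sct(\phi), \]
where the outer equalities use that $a$ and $b$ are homotopy equivalences and the middle one uses $\psi \circ a \simeq b \circ \phi$. The main obstacle is really concentrated in one place: arranging the homotopy-commutativity of the square, i.e.\ checking that the two composites agree on $\pi_1$ so that representability forces them to be homotopic. Once the representability bijection is invoked and that $\pi_1$-computation is made, the conclusion is purely formal.
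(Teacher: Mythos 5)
Your proposal is correct and follows essentially the canonical argument: the paper itself quotes this statement from Grant \cite[Lemma 3.2]{GrantPTC} without reproving it, and Grant's proof is exactly your route --- realize $\alpha$ via the bijection $[K(H',1),K(H,1)]_*\cong\mathrm{Hom}(H',H)$, connect two realizations by homotopy equivalences realizing the identity homomorphisms so that the square homotopy-commutes, and conclude with the invariance properties of $\sct$ under homotopy and under pre-/post-composition with homotopy equivalences (parts (d) and (e) of \cite[Lemma 2.1]{GrantPTC}). The only implicit hypothesis worth flagging is that the representability bijection and Whitehead's theorem require the spaces to have CW homotopy type, a standing assumption in this setting.
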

 
\begin{definition}[{\cite{GrantPTC}}\label{def:sct-homomorphisms}]
Let $f$ be any map which realizes a group homomorphism $\alpha : G \to H$. The sectional category of $\alpha$ is defined as $\sct(\alpha):=\sct(f)$.    
\end{definition}  
\noindent Let $j:\{1_G\}\hookrightarrow G$ be the inclusion homomorphism and  $\Delta_n: G\to G^n$ be the diagonal homomorphism. Then one can observe that $\ct(G)= \sct(j :\{1_G\}\hookrightarrow G)$ and  $\TC_n(G) = \sct(\Delta_n : G \to G^n)$.

Let $\alpha_i:G_i\to H$ be group homomorphisms for $1\leq i\leq n$. We define their \emph{generalized fibred product}, denoted by $_{H}\prod_{i=1}^{n}G_i$ as \[_{H}\prod_{i=1}^{n}G_i:=\{(g_1,\dots,g_n)\in G^n \mid \alpha_i(g_s)=\alpha_j(g_t) ~\text{for}~ 1\leq i\neq j\leq n,~ 1\leq s\neq t\leq n\}.\]
If $G_i=G$ and $\alpha_i=\alpha$ for all $1\leq i\leq n$, then we denote  $_{B}\prod_{i=1}^{n}G_i$ by $G^n_H$.
\begin{definition}\label{def: ptchomo}
Consider a group homomorphism $\alpha:G\to H$. Then the sequential parametrized topological complexity of $\alpha$  is denoted by $\TC_n[\alpha:G\to H]$ and defined as \[\TC_n[\alpha:G\to H]:=\sct(\Delta_n:G\to G^n_H).\]
\end{definition}

A fibration $p: E \to B$ is said to be \emph{$0$-connected} if it is surjective and has path-connected
fibres. It is known that only epimorphisms can be realized by $0$-connected fibrations. Therefore, we restrict ourselves to this setting.
Grant \cite[Proposition 3.5]{GrantPTC} showed that the parametrized topological complexity of group epimorphisms coincides with the   parametrized topological complexity of fibrations which realizes them. Here we prove sequential analogue of this result.
\begin{proposition}\label{thm: sptcc of realized fib}
Let $p: E\to B$ be a $0$-connected fibration which realizes the epimorphism $\alpha:G\to H$. Then \[\TC_n[p:E\to B]=\TC_n[\alpha: G\to H].\]
\end{proposition}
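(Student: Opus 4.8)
The plan is to realize the sequential parametrized topological complexity of the epimorphism $\alpha$ as the sequential parametrized topological complexity of a \emph{specific} fibration realizing it, and then appeal to the fibre-preserving homotopy invariance established in the previous section to transfer the conclusion to an arbitrary realizing fibration $p : E \to B$. The key observation is that $\TC_n[\alpha : G \to H] := \sct(\Delta_n : G \to G^n_H)$ is, by \Cref{def:sct-homomorphisms}, the sectional category of \emph{any} map realizing the diagonal homomorphism $\Delta_n : G \to G^n_H$, and by the realization theorem \cite[Lemma 3.2]{GrantPTC} this number is well-defined independently of the chosen realizing map. So the whole argument reduces to exhibiting one convenient realizing fibration and matching the two invariants there.

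First I would observe that for the given $0$-connected fibration $p : E \to B$ realizing $\alpha$, the relevant spaces are aspherical. Since $p$ is $0$-connected with fibre $F$ and $E, B$ are $K(G,1)$ and $K(H,1)$ respectively, the long exact homotopy sequence of $p$ forces $F$ to be a $K(\kr(\alpha),1)$, and one checks that the generalized fibre product $E^n_B$ is a $K(G^n_H,1)$: indeed $E^n_B$ is the total space of an iterated fibre product, and using \Cref{rmk: gen fibprod} together with the homotopy exact sequences one identifies $\pi_1(E^n_B) \cong G^n_H$ and verifies the higher homotopy groups vanish. The diagonal map $\Delta_n : E \to E^n_B$ then realizes the diagonal homomorphism $\Delta_n : G \to G^n_H$ on fundamental groups, since $E \simeq K(G,1)$ and $E^n_B \simeq K(G^n_H,1)$ and $\Delta_n$ is visibly induced by $g \mapsto (g,\dots,g)$.

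Next, since $\Delta_n : E \to E^n_B$ is a map of aspherical spaces realizing the homomorphism $\Delta_n : G \to G^n_H$, the well-definedness part of the realization theorem gives
\[
\sct(\Delta_n : G \to G^n_H) = \sct(\Delta_n : E \to E^n_B).
\]
Finally, I would invoke the remark in \Cref{sec: sptc} that $\TC_n[p : E \to B] = \sct(\Pi_n) = \sct(\Delta_n : E \to E^n_B)$, the equality $\sct(\Pi_n) = \sct(\Delta_n)$ holding because $\Delta_n$ is a section-up-to-homotopy companion of the parametrized endpoint fibration $\Pi_n$ (both sectional categories agree as $\Delta_n$ is a homotopy equivalence onto the image of the constant-path inclusion, which is a deformation retract of $E^I_B$ compatible with the fibrations). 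Chaining these identifications yields
\[
\TC_n[p : E \to B] = \sct(\Delta_n : E \to E^n_B) = \sct(\Delta_n : G \to G^n_H) = \TC_n[\alpha : G \to H].
\]

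The main obstacle I expect is the asphericity of $E^n_B$ and the correct identification $\pi_1(E^n_B) \cong G^n_H$; one must argue carefully through the iterated fibre-product description in \Cref{rmk: gen fibprod} that each stage is again a fibration with aspherical fibre and aspherical base, so that the higher homotopy groups genuinely vanish and the fundamental group is exactly the algebraic generalized fibre product rather than a proper subgroup or extension. A secondary subtlety is ensuring that the constant-path inclusion $E^n_B \hookrightarrow E^I_B$, which sends a tuple to nothing directly but is mediated by $\Delta_n$, really induces the claimed equality $\sct(\Pi_n) = \sct(\Delta_n)$ fibrewise; this is the sequential parametrized analogue of the standard fact that $\TC$ may be computed either via the endpoint fibration or via the diagonal, and it follows from the fibre-preserving homotopy invariance just proved together with the contractibility of $E^I_B$ onto the image of constant paths.
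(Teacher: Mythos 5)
Your proposal is correct and follows essentially the same route as the paper's proof: both reduce the statement to showing that the diagonal map $d\colon E \to E^n_B$ realizes the diagonal homomorphism $\Delta_n\colon G \to G^n_H$ — the paper does this via the Mayer--Vietoris sequence of the fibre sequence $(\Omega B)^{n-1} \to E^n_B \to E^n$, while you use the iterated fibre-product description of \Cref{rmk: gen fibprod} inductively, which yields the same identification $\pi_1(E^n_B)\cong G^n_H$ (using $\pi_2(B)=0$ for injectivity into $G^n$) and the same asphericity of $E^n_B$. Both arguments then combine the equality $\sct(\Pi_n)=\sct(d\colon E\to E^n_B)$, obtained from the constant-path homotopy equivalence $h$ with $\Pi_n\circ h=d$, with the well-definedness of the sectional category of a homomorphism from \cite[Lemma 3.2]{GrantPTC}, so there is no gap.
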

\begin{proof}
 Consider the following diagram 
\[\begin{tikzcd}
E \arrow{dr}{d} \arrow{rr}{h}
& & E^I_B \arrow{dl}{\Pi_n} \\
& E^n_B, 
\end{tikzcd}\] 
where $h$ is defined by sending a point $e$ to the constant path at $e$,  $d$ is the diagonal map and $\Pi_n$ defined in \eqref{eq: Pin}.
Note that $h$ is a homotopy equivalence, since it has a homotopy inverse $h':E^I_B\to E$ defined by sending a path to its midpoint.  
Since $h$ is a homotopy equivalence, the equality $\TC_n[p: E\to B]=\sct[d:E\to E^n_B]$ follows from \cite[Corollary 3.7]{AS}. Thus, it is enough to show that the map $d:E\to E^n_B$ realizes the diagonal map $\Delta_n: G\to G^n_H$.

We now consider the Mayer-Vietoris sequence  corresponding to the fibre sequence $(\Omega B)^{n-1}\to E^n_B\to E^n$ (see \cite[Corollary 2.2.3]{MP}) :
\[\begin{tikzcd}
\dots \arrow[r] & \pi_k(E^n_B) \arrow[r]
& \pi_k(E)^n \arrow[r]
\arrow[d, phantom, ""{coordinate, name=Z}]
& \pi_k(B)^{n-1} \arrow[dll, dashed,
"",
rounded corners,
to path={ -- ([xshift=2ex]\tikztostart.east)
|- (Z) [near end]\tikztonodes
-| ([xshift=-2ex]\tikztotarget.west)
-- (\tikztotarget)}] \\
 & \pi_2(E^n_B) \arrow[r]
&\pi_2(E)^n \arrow[r]
\arrow[d, phantom, ""{coordinate, name=Z}]
& \pi_2(B)^{n-1} \arrow[dll,
"",
rounded corners,
to path={ -- ([xshift=2ex]\tikztostart.east)
|- (Z) [near end]\tikztonodes
-| ([xshift=-2ex]\tikztotarget.west)
-- (\tikztotarget)}] \\
 & \pi_1(E^n_B) \arrow[r]
& \pi_1(E)^n \arrow[r]
\arrow[d, phantom, ""{coordinate, name=Z}]
& \pi_1(B)^{n-1} \arrow[dll,
"",
rounded corners,
to path={ -- ([xshift=2ex]\tikztostart.east)
|- (Z) [near end]\tikztonodes
-| ([xshift=-2ex]\tikztotarget.west)
-- (\tikztotarget)}] \\
& \pi_0(E^n_B) \arrow[r]
& \pi_0(E)^n \arrow[r]
& \pi_0(B)^{n-1},
\end{tikzcd}\]
where the map  $\pi_1(E)^n\to \pi_1(B)^{n-1}$ of pointed sets is defined by \[(g_1,\dots, g_n)\to (\alpha(g_1)\alpha(g_n)^{-1},\dots, \alpha(g_{n-1})\alpha(g_n)^{-1}).\]
Then again using \cite[Corollary 2.2.3]{MP}, we obtain the isomorphism $\pi_1(E^n_B)\cong \pi_1(E)^n_{\pi_1(B)}=G^n_H$. 
Also it follows that $E^n_B$ is $K(G^n_H,1)$. Additionally, since $B$ is $K(H,1)$ space, we have the following short exact sequence: 
\[\begin{tikzcd}
    1=\pi_2(B)^{n-1} \arrow{r}{}& \pi_1(E^n_B)\arrow{r}{}& \pi_1(E)^n\arrow{r}{}&\pi_1(B)^n \arrow{r}{}&\dots
\end{tikzcd}\]
This implies the homomorphism $\pi_1(E^n_B)\to \pi_1(E)^n$ induced by the inclusion $E^n_B\to E^n$ is injective. Therefore, the homomorphism $\pi_1(E^n_B)\to \pi_1(E)^n$ induced by the inclusion $E^n_B\to E^n$ is subgroup inclusion $G^n_H\hookrightarrow G^n$.
Now observing the commutative diagram
\[
\begin{tikzcd}
G \arrow{r}{d_*} \arrow[dr, "\Delta_n"']
& G^n_H \arrow[d, hook]\\
& G^n,
\end{tikzcd}
\]
it follows that the map $d:E\to E^n_B$ realizes the diagonal homomorphism $\Delta_n: G\to G^n_H$.
\end{proof}

\begin{example}\label{ex: tcnG}
 Let $\alpha: G\to \{1_G\}$ be the trivial epimorphism. Then $\TC_n[\alpha: G\to \{1_G\}]=\TC_n(G)$.   
\end{example}

\begin{lemma}\label{lem: tcnN}
Let $N=\mathrm{ker}(\alpha: G\to H)$. Then $\TC_n(N)\leq \TC_n[\alpha: G\to H]$.
\end{lemma}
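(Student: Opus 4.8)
The plan is to pass to a fibration that realizes $\alpha$ and then restrict the parametrized endpoint fibration to a single fibre, where it degenerates to the ordinary sequential motion-planning fibration. Concretely, I would first use the realization theorem to choose a $0$-connected fibration $p:E\to B$ realizing $\alpha:G\to H$, so that $E$ is a $K(G,1)$, $B$ is a $K(H,1)$, and $p_\ast=\alpha$. By \Cref{thm: sptcc of realized fib} this already gives $\TC_n[\alpha:G\to H]=\TC_n[p:E\to B]$, so it suffices to bound the right-hand side from below by $\TC_n(N)$.

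The geometric input is the fibre $F=p^{-1}(b_0)$ over the basepoint $b_0\in B$. From the long exact homotopy sequence of $p$, using $\pi_k(E)=0$ and $\pi_k(B)=0$ for $k\geq 2$ together with the $0$-connectivity of $p$ (which makes $F$ path-connected), one reads off $\pi_1(F)\cong\ker(\alpha)=N$ and $\pi_k(F)=0$ for $k\geq 2$. Hence $F$ is a $K(N,1)$, and since $\TC_n$ is a homotopy invariant we have $\TC_n(F)=\TC_n(N)$. Next I would observe that $F\to\{b_0\}$ is precisely the pullback of $p$ along the inclusion $\{b_0\}\hookrightarrow B$ (the pullback of a map along a point is, by definition, the corresponding fibre), and in particular it is again a fibration. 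Applying \Cref{lem: sptc pullback} to this pullback yields $\TC_n[F\to\{b_0\}]\leq \TC_n[p:E\to B]$.

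Finally I would identify $\TC_n[F\to\{b_0\}]$ with the ordinary sequential topological complexity $\TC_n(F)$. When the base is a single point, any $n$-tuple of points lying in a common fibre is just an $n$-tuple of points of $F$, so $F^n_{\{b_0\}}=F^n$; likewise a path whose image lies in a single fibre is simply a path in $F$, so $F^I_{\{b_0\}}=F^I$. Under these identifications the parametrised endpoint fibration $\Pi_n$ becomes the ordinary fibration $\pi_n:F^I\to F^n$, whence $\TC_n[F\to\{b_0\}]=\sct(\pi_n)=\TC_n(F)=\TC_n(N)$. Chaining the relations gives $\TC_n(N)=\TC_n[F\to\{b_0\}]\leq \TC_n[p:E\to B]=\TC_n[\alpha:G\to H]$, which is the desired inequality.

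The argument is short, so the only real content beyond bookkeeping is twofold, and this is where I expect the care to lie: first, confirming via the homotopy sequence that the fibre is genuinely aspherical with $\pi_1\cong N$ (so that $\TC_n(F)=\TC_n(N)$), and second, checking that the pullback of $p$ along $\{b_0\}\hookrightarrow B$ is literally the fibre $F$ rather than a homotopy fibre, so that \Cref{lem: sptc pullback} applies verbatim and the parametrized structure genuinely collapses to the unparametrized one over a point. Both are verifications rather than substantial obstacles, but getting the degeneration $F^n_{\{b_0\}}=F^n$, $F^I_{\{b_0\}}=F^I$ exactly right is the crux that makes the chain of (in)equalities go through.
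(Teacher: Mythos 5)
Your proof is correct, and it reaches the key inequality by a different mechanism than the paper does. Both arguments share the same skeleton: realize $\alpha$ by a ($0$-connected) fibration, observe that the fibre $F$ is a $K(N,1)$, and conclude via $\TC_n(F)\leq \TC_n[p:E\to B]$ together with \Cref{thm: sptcc of realized fib}. But where the paper simply cites the fibre inequality $\TC_n(F)\leq \TC_n[p:E\to B]$ as a known result of Farber and Paul from \cite{SequentialPTC}, you derive it internally: you pull back $p$ along $\{b_0\}\hookrightarrow B$, invoke \Cref{lem: sptc pullback} to get $\TC_n[F\to\{b_0\}]\leq \TC_n[p:E\to B]$, and then observe the degeneration $F^n_{\{b_0\}}=F^n$, $F^I_{\{b_0\}}=F^I$, under which the parametrized endpoint fibration $\Pi_n$ becomes the ordinary fibration $\pi_n:F^I\to F^n$, so that $\TC_n[F\to\{b_0\}]=\sct(\pi_n)=\TC_n(F)$. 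Your identifications here are exactly right (the strict pullback over a point is literally the fibre, and every path lying in a single fibre is a path in $F$), and your long-exact-sequence verification that $F$ is aspherical with $\pi_1(F)\cong N$ is also correct, using $\pi_2(B)=0$ and the $0$-connectivity of $p$; the paper asserts this without comment. What each approach buys: the paper's proof is a two-line appeal to the literature, while yours is self-contained within the paper's own toolkit (notably it shows that the pullback lemma, specialized to a point, already recovers the fibre bound), at the modest cost of the bookkeeping you carried out. The only caveat worth recording is that the realization by a $0$-connected fibration requires $\alpha$ to be an epimorphism, which is implicit in the paper's standing hypotheses.
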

\begin{proof}
Suppose the group homomorphism  $\alpha: G\to H$ is realized by a fibration $p:X\to Y$. Let $F$ be the fibre of $p$. Observe that $F$ is $K(N,1)$ space. Then from \cite{SequentialPTC} we obtain the desired inequality \[\TC_n(N)=\TC_n(F)\leq \TC_n[p:X\to Y]=\TC_n[\alpha:G\to H].\qedhere\]
\end{proof}

\section{Bounds}\label{sec: bounds}
In this section, we obtain various bounds on the sequential parametrized topological complexity of group epimorphisms. We begin by recalling some definitions.

\begin{definition}
An open set $V$ of $X$ is called $1$-categorical if for any cell-complex $Z$ of dimension less than or equal to $1$, each composition $Z \to V\hookrightarrow X$ is null-homotopic.     
\end{definition}
 
\begin{definition}[{\cite{Fox}}]
The least number of $1$-categorical subsets that covers a topological space $X$ is called the $1$-dimensional category.    
\end{definition}
It follows from \cite[Proposition 44]{Svarc61}
that, the $\ct_1(X)$ coincides with the sectional category of the universal cover $\tilde{X}\to X$. In particular, for aspherical space $X$ we have $\ct_1(X)=\ct(X)$.

In \cite[Theorem 2.1]{htcfarberaspherical},  Farber and Oprea show that a lower bound on the sequential $n$-th topological complexity of groups $G$ can be given in terms of the cohomological dimension of certain subgroups of $G^n$. We generalize this result in the context of sequential parametrized topological complexity of group epimorphisms.
\begin{theorem}\label{thm: lbtcnptc}
  Let $\alpha: G\to H$ be a group epimorphism.  Let $L$ be a subgroup of $G^n_H$ such that $L\cap N=\{1\}$ for any subgroup $N$ of $G^n$ with $gNg^{-1}=\Delta_n(G)$ for some $g\in G^n_H$. Then 
    \[\TC_n[\alpha: G\to H]\geq \mathrm{cd}(L).\]
\end{theorem}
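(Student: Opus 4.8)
The plan is to reduce the statement to a sectional-category (Schwarz genus) computation and then apply the cohomological lower bound for sectional category in terms of the sectional category of a restricted fibration, in the spirit of Farber--Oprea's argument in \cite[Theorem 2.1]{htcfarberaspherical}. By \Cref{def: ptchomo} we have $\TC_n[\alpha : G \to H] = \sct(\Delta_n : G \to G^n_H)$, and by \Cref{thm: sptcc of realized fib} this equals $\TC_n[p : E \to B]= \sct(\Pi_n)$ for any $0$-connected fibration $p$ realizing $\alpha$, where $E$ is aspherical with $\pi_1(E)=G$ and $E^n_B$ is aspherical with $\pi_1(E^n_B)=G^n_H$. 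Thus I want to exhibit a lower bound on $\sct(\Delta_n : E \to E^n_B)$ of the form $\cd(L)$ whenever $L \le G^n_H$ avoids every conjugate-type subgroup $N$ with $gNg^{-1}=\Delta_n(G)$.

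First I would pass to the classifying-space model $K(L,1) \to E^n_B$ induced by the subgroup inclusion $L \hookrightarrow G^n_H = \pi_1(E^n_B)$; up to homotopy this is a covering-type map onto an aspherical subspace, and restricting the fibration $\Delta_n$ (equivalently $\Pi_n$) along it gives a pullback fibration over $K(L,1)$. The standard monotonicity of Schwarz genus under pullback (the group-level analogue is already recorded in \Cref{lem: sptc pullback}) gives $\sct(\Delta_n : E \to E^n_B) \ge \sct(\text{restricted fibration over } K(L,1))$. The key point, exactly as in Farber--Oprea, is then to show that the restricted fibration admits \emph{no} continuous section over any subcomplex carrying nontrivial cohomology up to dimension $\cd(L)$; concretely, a section over an open set $U \subseteq K(L,1)$ forces the relevant subgroup of $\pi_1$ to be conjugate into the fibre's fundamental group, i.e.\ into a subgroup $N$ with $gNg^{-1}=\Delta_n(G)$. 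The hypothesis $L \cap N = \{1\}$ for all such $N$ is precisely what guarantees that each categorical/sectional piece meets $L$ trivially, so each open set in a genus cover is ``null'' on the $L$-cohomology.

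Concretely I would argue as follows. The fibre of $\Delta_n$ (or of $\Pi_n$) is a $K(\Delta_n(G),1)$ sitting inside $E^n_B$ as the image of the diagonal, and an open set $W_i \subseteq E^n_B$ admitting a section $s_i$ of $\Pi_n$ must factor, on $\pi_1$, so that the image of $\pi_1(W_i)$ lands in a conjugate of $\Delta_n(G)$ inside $G^n_H$, i.e.\ in some $N$ with $gNg^{-1}=\Delta_n(G)$. Restricting to $L$, the composite $\pi_1(W_i \cap K(L,1)) \to L \hookrightarrow G^n_H$ lands in $L \cap N = \{1\}$, so the inclusion of each piece of the genus cover is $1$-categorical with respect to $L$; hence the pulled-back cover has each piece cohomologically trivial for $L$-coefficients. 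Therefore a genus cover of size $r+1$ yields, by the standard cup-length/cohomological-dimension argument for sectional category, a factorization of the top class; if $r < \cd(L)$ one reaches a contradiction with the existence of a nonzero cohomology class in degree $\cd(L)$ of $K(L,1)$ pulled back nontrivially. I expect the main obstacle to be the precise bookkeeping in the second step: verifying that ``local section'' translates cleanly into the $\pi_1$-subconjugacy condition $\mathrm{im}(\pi_1(W_i)) \subseteq N$ with $gNg^{-1}=\Delta_n(G)$, and ensuring the aspherical/covering-space model makes this a genuine $1$-categorical statement so that the Farber--Oprea cohomological-dimension lower bound for $\sct$ applies verbatim after pullback to $K(L,1)$.
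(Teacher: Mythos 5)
Your proposal is correct and follows essentially the same route as the paper's proof: realize the inclusion $L\hookrightarrow G^n_H$ by a map $\psi:K(L,1)\to E^n_B$, pull back $\Pi_n$ along $\psi$, use monotonicity of sectional category under pullback, and show that every open set admitting a local section is $1$-categorical because each loop in it maps to a class conjugate in $G^n_H$ into $\Delta_n(G)$, which the hypothesis $L\cap N=\{1\}$ forces to be trivial, yielding $\sct(q)\geq \ct_1(K(L,1))=\ct(L)=\cd(L)$ via Eilenberg--Ganea. The bookkeeping issue you flag at the end is resolved in the paper exactly as in Farber--Oprea: one works with free homotopy classes $[S^1,X]$ (i.e., conjugacy classes in $L$) rather than with the image of $\pi_1$ of a whole open set, so the conjugation-into-$\Delta_n(G)$ statement is elementwise and no path-connectedness or basepoint subtleties arise.
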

\begin{proof}
Our argument closely follows the approach in \cite[Theorem 4.1]{GrantPTC}.
Consider a fibration $p:E\to B$ that realizes $\alpha:G\to H$. Let $X$ be a $K(L,1)$ space and $\psi: X\to E^n_B$ be a map  realizing the subgroup inclusion $L\hookrightarrow G^n_H$. Consider the following pullback diagram
\[ \begin{tikzcd}
Y \arrow{r}{} \arrow[swap]{d}{q} & E^I_B \arrow{d}{\Pi_n} \\%
X \arrow{r}{\psi}&E^n_B.
\end{tikzcd}
\]
It follows from \cite[Lemma 2.1(b)]{GrantPTC} that $\sct(q)\leq \TC_n[\alpha: G\to H]$.

Since $\mathrm{cd}(L)=\ct(X)$ by \cite{EilenbergGanea} and by asphericity $\ct_1(X)=\ct(X)$, it suffices to show that $\ct_1(X)\leq \sct(q)$. 
Let $U$ be an open subset of $X$ with a continuous local section $s:U\to Y$ of $q$. We need to show that $U$ is $1$-categorical. It follows from \cite[Lemma 5.3]{lbTCaspherical} that an open set $U \subseteq X$ is $1$-categorical if and only if every composition $f: S^1 \to U \hookrightarrow X$ is null-homotopic.
Consider the following diagram 
\[ \begin{tikzcd}
\left[S^1,Y\right] \arrow{r}{} \arrow[swap]{d}{q_{\ast}} & \left[S^1,E^I_B\right] \arrow{d}{\Pi_{n\ast}} \arrow{r}{m_{\ast}}   & \left[S^1,E\right] \arrow{dl}{d_{\ast}}\\%
\left[S^1,X\right] \arrow{r}{\psi_{\ast}}& \left[S^1,E^n_B\right]. 
\end{tikzcd}
\]
There is a bijection between conjugacy classes in $\pi_1(X,x_0)$ and $[S^1,X]$.  Thus, each $[f]\in [S^1,X]$ is corresponds to a conjugacy class $[(a_1,\dots,a_n)]$ in $L$. Observe that, by the commutativity of the above diagram we get $\psi_{\ast}([f])=d_{\ast}([f'])$ for some $[f']\in [S^1,E]$. This implies that an element $(a_1,\dots,a_n)$ is conjugate to a some element of $\Delta_n(G)$ in $G^n_H$.
Therefore, $(b_1,\dots,b_n)(a_1,\dots,a_n)(b_1,\dots,b_n)^{-1}=(c,\dots,c)$ for some $(b_1\dots,b_n)\in G^n_H$.
But this implies that $(a_1,\dots,a_n)\in N$ as well. Since $L\cap N=\{1\}$, we get that $a_i=1$ for all $1\leq i\leq n$. Hence $f:S^1\to U\hookrightarrow X$ is nullhomotopic.
\end{proof}

\begin{remark}
Observe that, in \Cref{thm: lbtcnptc} we get \cite[Theorem 2.1]{htcfarberaspherical} as a  special case  when we consider a trivial epimorphism $\alpha:G\to \{1_G\}$.
\end{remark}

We now prove some special cases of \Cref{thm: lbtcnptc}.

\begin{corollary}\label{cor: cd-lb}
 Let $\alpha:G\to H$ be a group epimorphism. Let $A_i$ be subgroups of $G$ such that  $\cap_{i=1}^ng_iA_ig_i^{-1}=\{1\}$ for any $(g_1,\dots,g_n)\in G^n_H$. Then \[\TC_n[\alpha: G\to H]\geq \mathrm{cd}(_{H}\prod_{i=1}^nA_i).\]   
\end{corollary}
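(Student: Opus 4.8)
The plan is to derive this corollary as a direct application of \Cref{thm: lbtcnptc} by exhibiting the generalized fibred product $L := {_{H}\prod_{i=1}^{n}A_i}$ as a subgroup of $G^n_H$ satisfying the intersection hypothesis of that theorem. First I would observe that since each $A_i$ is a subgroup of $G$ and the defining condition of $_{H}\prod_{i=1}^{n}A_i$ is that $\alpha(a_s)=\alpha(a_t)$ for all indices, every element of $_{H}\prod_{i=1}^{n}A_i$ lies in $G^n_H$; hence $L$ is a subgroup of $G^n_H$ via the componentwise inclusions $A_i\hookrightarrow G$. The goal is then to verify that $L\cap N=\{1\}$ for every subgroup $N\leq G^n$ with $gNg^{-1}=\Delta_n(G)$ for some $g\in G^n_H$, after which the inequality $\TC_n[\alpha:G\to H]\geq\cd(L)$ is immediate.

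The key step is the verification of the intersection condition. Fix such an $N$ with $g N g^{-1}=\Delta_n(G)$ for $g=(g_1,\dots,g_n)\in G^n_H$, and suppose $(a_1,\dots,a_n)\in L\cap N$. Then $(a_1,\dots,a_n)\in N$ means $g(a_1,\dots,a_n)g^{-1}\in\Delta_n(G)$, i.e.\ $(g_1 a_1 g_1^{-1},\dots,g_n a_n g_n^{-1})=(c,\dots,c)$ for some $c\in G$. Componentwise this gives $g_i a_i g_i^{-1}=c$, equivalently $a_i=g_i^{-1} c\, g_i\in g_i^{-1} A_i g_i$ (using that $a_i\in A_i$). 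Therefore $c\in g_i A_i g_i^{-1}$ for every $i$, so $c\in\bigcap_{i=1}^{n} g_i A_i g_i^{-1}$. Since $(g_1,\dots,g_n)\in G^n_H$, the hypothesis $\bigcap_{i=1}^{n} g_i A_i g_i^{-1}=\{1\}$ forces $c=1$, and hence $a_i=g_i^{-1}\cdot 1\cdot g_i=1$ for all $i$. This shows $L\cap N=\{1\}$.

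With the hypothesis of \Cref{thm: lbtcnptc} verified for $L={_{H}\prod_{i=1}^{n}A_i}$, the conclusion $\TC_n[\alpha:G\to H]\geq\cd(L)=\cd({_{H}\prod_{i=1}^{n}A_i})$ follows at once. I do not expect a genuine obstacle here; the only point requiring care is bookkeeping the directions of conjugation, namely translating the conjugacy relation $gNg^{-1}=\Delta_n(G)$ into the membership $c\in\bigcap_i g_i A_i g_i^{-1}$ so that it matches the quantifier ``for any $(g_1,\dots,g_n)\in G^n_H$'' appearing in the corollary's hypothesis exactly. Once the conjugation is arranged consistently, the argument is a short componentwise computation.
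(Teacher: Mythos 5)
Your proposal is correct and follows essentially the same route as the paper: both reduce the corollary to \cref{thm: lbtcnptc} and verify the intersection hypothesis by the same componentwise computation, conjugating $(a_1,\dots,a_n)\in L\cap N$ by $g=(g_1,\dots,g_n)\in G^n_H$ to land in $\Delta_n(G)$ and then invoking $\bigcap_{i=1}^{n} g_i A_i g_i^{-1}=\{1\}$ to force each $a_i=1$. Your version is slightly more explicit than the paper's (naming the common value $c$ and checking $L\leq G^n_H$), but there is no substantive difference.
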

\begin{proof}
Let $L=_{H}\prod_{i=1}^nA_i$. Following \Cref{thm: lbtcnptc}, we need to show that $L\cap N=\{1\}$ for any subgroup $N$ of $G^n$ such that $gNg^{-1}=\Delta_n(G)$. Suppose $\bar{a}=(a_1\dots,a_n)\in L\cap N$ with the above property. Thus we get $g\bar{a}g^{-1}=(c,\dots,c)$, where $g=(g_1,\dots,g_n)\in G^n_H$.
This implies $g_ia_ig_i^{-1}\in \cap_{i=1}^ng_iA_ig_i^{-1}$. Since $\cap_{i=1}^ng_iA_ig_i^{-1}=\{1\}$, we have $a_i=1$ for $1\leq i\leq n$. This concludes the proof.
\end{proof}

The following result is a generalization of Grant's result in \cite[Theorem 4.1]{GrantPTC}.
\begin{corollary}\label{cor:seqanalogue-Grant-lb}
Let $\alpha:G\to H$ be group epimorphism. Let $A_i$ be subgroups of $G$ for $1\leq i
\leq n$ such that $gA_ig^{-1}\cap A_j=\{1\}$ for $i\neq j$ and $g\in \mathrm{ker}(\alpha)$.  Then \[\TC_n[\alpha: G\to H]\geq \mathrm{cd}(_{H}\prod_{i=1}^nA_i).\]    
\end{corollary}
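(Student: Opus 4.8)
The plan is to deduce this statement directly from \Cref{cor: cd-lb}, whose conclusion is word-for-word the bound asserted here once one sets $L = {}_{H}\prod_{i=1}^{n} A_i$. Thus it suffices to verify that the hypothesis of \Cref{cor: cd-lb} follows from the hypothesis given here: namely, that $\cap_{i=1}^n g_i A_i g_i^{-1} = \{1\}$ for every tuple $(g_1, \dots, g_n) \in G^n_H$. The single ingredient linking the two conditions is the defining property of $G^n_H$: a tuple $(g_1,\dots,g_n)$ lies in $G^n_H$ precisely when $\alpha(g_1) = \dots = \alpha(g_n)$, so that $g_j^{-1} g_i \in \mathrm{ker}(\alpha)$ for every pair of indices.

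Concretely, I would fix $(g_1, \dots, g_n) \in G^n_H$ together with an element $x \in \cap_{i=1}^n g_i A_i g_i^{-1}$, and write $x = g_i a_i g_i^{-1}$ with $a_i \in A_i$ for each $i$. Choosing any two distinct indices $i \neq j$ and equating $g_i a_i g_i^{-1} = g_j a_j g_j^{-1}$ gives $a_j = g a_i g^{-1}$ where $g := g_j^{-1} g_i$. Since $(g_1,\dots,g_n)\in G^n_H$ forces $\alpha(g_i) = \alpha(g_j)$, we have $g \in \mathrm{ker}(\alpha)$, and therefore $a_j \in g A_i g^{-1} \cap A_j$. The assumed disjointness $g A_i g^{-1} \cap A_j = \{1\}$ for $i \neq j$ and $g \in \mathrm{ker}(\alpha)$ then forces $a_j = 1$, whence $x = g_j a_j g_j^{-1} = 1$. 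This verifies the hypothesis of \Cref{cor: cd-lb}, and the stated lower bound follows immediately.

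There is essentially no obstacle beyond this bookkeeping; the only step requiring genuine attention is the translation between the two hypotheses, and the content lies entirely in recognizing that membership in $G^n_H$ makes the conjugating element $g_j^{-1} g_i$ land in $\mathrm{ker}(\alpha)$ — which is exactly the regime in which the disjointness hypothesis is posited. I would also remark that the argument uses only a single pair of distinct indices $i \neq j$, so it is valid for all $n \geq 2$, which is the only relevant range for $\TC_n$.
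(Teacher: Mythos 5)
Your proof is correct, but it is routed differently from the paper's. The paper proves \Cref{cor:seqanalogue-Grant-lb} by appealing directly to \Cref{thm: lbtcnptc}: it takes a tuple $\bar a=(a_1,\dots,a_n)\in L\cap N$ with $L={}_{H}\prod_{i=1}^n A_i$ and $hNh^{-1}=\Delta_n(G)$ for some $h\in G^n_H$, conjugates $\bar a$ into the diagonal to get $h_ia_ih_i^{-1}=h_ja_jh_j^{-1}$, notes $h_j^{-1}h_i\in\ker(\alpha)$, and kills each coordinate $a_j$ via the disjointness hypothesis. You instead factor through \Cref{cor: cd-lb}, showing that the hypothesis here ($gA_ig^{-1}\cap A_j=\{1\}$ for $i\neq j$ and $g\in\ker(\alpha)$) implies the intersection hypothesis there ($\cap_{i=1}^n g_iA_ig_i^{-1}=\{1\}$ for every $(g_1,\dots,g_n)\in G^n_H$). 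The core algebra is identical in the two arguments --- equating conjugates, observing that membership in $G^n_H$ forces $g_j^{-1}g_i\in\ker(\alpha)$, and invoking disjointness --- with the common value $c=h_ja_jh_j^{-1}$ in the paper's proof playing exactly the role of your element $x$. What your route buys is modularity: it exhibits \Cref{cor:seqanalogue-Grant-lb} as a formal special case of \Cref{cor: cd-lb} (its hypothesis is strictly stronger), rather than re-verifying the $L\cap N=\{1\}$ condition of \Cref{thm: lbtcnptc} from scratch as the paper does; your observation that a single pair $i\neq j$ already forces $x=g_ja_jg_j^{-1}=1$ is also a small tightening over the paper's coordinate-by-coordinate phrasing. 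Both deductions are sound and of essentially the same length, since \Cref{cor: cd-lb} is itself proved from \Cref{thm: lbtcnptc}, so no circularity arises.
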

\begin{proof}
Let $L=_{H}\prod_{i=1}^nA_i$.
Suppose $\bar{a}=(a_1\dots,a_n)\in L\cap N$ with the property, $hNh^{-1}=\Delta_n(G)$ for some $h=(h_1,\dots,h_n)\in G^n_H$. Thus we get $h\bar{a}h^{-1}=(c,\dots,c)$, where $h=(h_1,\dots,h_n)\in G^n_H$. This implies $h_ia_ih_i^{-1}=h_ja_jh_j^{-1}$ for any $1\leq i,j\leq n$. From this we get $(h_j^{-1}h_i)a_i(h_j^{-1}h_i)^{-1}=a_j$. 
Note that $h_j^{-1}h_i\in \mathrm{ker}(\alpha)$. Thus from hypothesis we get that  $(h_j^{-1}h_i)a_i(h_j^{-1}h_i)^{-1}=1$. This gives $a_i=1$ for $1\leq i\leq n$. This proves $L\cap N=\{1\}$ for any subgroup $N$ of $G^n$ such that $gNg^{-1}=\Delta_n(G)$. Then the conclusion follows from \Cref{thm: lbtcnptc}.
\end{proof}

We now give another description of fibre product $G^n_H$ of groups which is a straightforward generalization of \cite[Lemma 4.7]{baro2023sequential}.

\begin{lemma}\label{lem: alt-decr-fib-prod}
    Let $\alpha: G\to H$ be a group epimorphism of type F groups. Then \[G^n_H=((\mathrm{ker}\ \alpha )^{n-1} \times \{1\})\cdot \Delta_n(G).\]
\end{lemma}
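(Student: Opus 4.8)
The plan is to prove the set equality $G^n_H=((\kr\alpha)^{n-1}\times\{1\})\cdot\Delta_n(G)$ by showing containment in both directions, where the right-hand side is the set of products $(k_1,\dots,k_{n-1},1)\cdot(g,\dots,g)=(k_1g,\dots,k_{n-1}g,g)$ with each $k_i\in\kr\alpha$ and $g\in G$. The reverse containment $\supseteq$ should be the easy direction: I first observe that $\Delta_n(G)\subseteq G^n_H$ since $\alpha(g)=\alpha(g)$ trivially, and that $(\kr\alpha)^{n-1}\times\{1\}\subseteq G^n_H$ since every coordinate of such an element maps to the identity in $H$; because $G^n_H$ is a subgroup of $G^n$ (its elements are tuples whose coordinates share a common $\alpha$-image, which is closed under multiplication and inverses), the product of these two subsets lands back in $G^n_H$.

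The forward containment $\subseteq$ is where the real content lies, and it is essentially a reformulation rather than a hard argument. Given an arbitrary element $(g_1,\dots,g_n)\in G^n_H$, the defining condition forces $\alpha(g_i)=\alpha(g_j)$ for all $i,j$; in particular $\alpha(g_i)=\alpha(g_n)$ for each $i<n$. The plan is to factor out the last coordinate: write
\[
(g_1,\dots,g_n)=(g_1g_n^{-1},\,\dots,\,g_{n-1}g_n^{-1},\,1)\cdot(g_n,\dots,g_n).
\]
Here the second factor is exactly $\Delta_n(g_n)$, and for the first factor I check that $\alpha(g_ig_n^{-1})=\alpha(g_i)\alpha(g_n)^{-1}=1_H$, so each entry $g_ig_n^{-1}$ lies in $\kr\alpha$, placing the first factor in $(\kr\alpha)^{n-1}\times\{1\}$. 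This exhibits the given element as a product of the required form, completing the containment.

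I expect the only genuine point requiring care — the main (mild) obstacle — to be confirming that the displayed factorization is a legitimate product of an element of the \emph{set} $(\kr\alpha)^{n-1}\times\{1\}$ with an element of the \emph{subgroup} $\Delta_n(G)$, and that both factors genuinely sit inside $G^n_H$ so the equation is an identity of elements of $G^n_H$ rather than merely of $G^n$. This is routine once one notes that $\kr\alpha$ is normal in $G$, so $(\kr\alpha)^{n-1}\times\{1\}$ is in fact a normal subgroup of $G^n_H$ and the right-hand side is a genuine subgroup; but for the bare set equality claimed in the lemma, the two explicit containments above suffice. The hypothesis that $G$ and $H$ are of type $F$ plays no role in the algebraic identity itself and is carried along only to match the standing assumptions under which $G^n_H$ and its cohomological dimension are subsequently used; I would note this in passing rather than invoke it in the proof.
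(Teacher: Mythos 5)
Your proof is correct and follows exactly the paper's argument: the same factorization $(g_1,\dots,g_n)=(g_1g_n^{-1},\dots,g_{n-1}g_n^{-1},1)\cdot(g_n,\dots,g_n)$ for the forward containment, and the observation $\alpha(k_ig)=\alpha(g)$ for the reverse. Your side remark that the type $F$ hypothesis plays no role in the algebraic identity is also accurate --- the paper's proof never uses it either.
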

\begin{proof}
Let $(k_1g,k_2g,\dots,k_{n-1}g,g)\in ((\mathrm{ker}\  \alpha)^{n-1}\times \{1\})\cdot \Delta_n(G)$. Since $\alpha(k_ig)=\alpha(g)$, we get that $((\mathrm{ker} \ \alpha)^{n-1}\times \{1\})\cdot \Delta_n(G)\subseteq G^n_H$.
To show the other inclusion, consider $(g_1,\dots,g_n)\in G^n_H$. One can write $(g_1,\dots,g_n)=(g_1g_n^{-1},\dots, g_{n-1}g_n^{-1},1)\cdot (g_n,\dots,g_n)$, Since $g_ig_n^{-1}\in \mathrm{ker} \ \alpha$, we have $G^n_H\subseteq((\mathrm{ker} \ \alpha)^{n-1} \times \{1\})\cdot \Delta_n(G)$.
\end{proof}

Now, we recall the definition of type $F$ groups.
\begin{definition}\label{def:typeF}
A group $H$ is said to be of type $F$ if there is a finite CW-complex of type $K(H,1)$.    
\end{definition}
A lower bound for the sectional categories of subgroup inclusions has been obtained in \cite[Theorem 3.10]{baro2023sequential}.
One can observe that for a group epimorphism $\alpha:G \to H$, the homomorphism $\Delta_n: G\to G^n_H$ can be thought of as a subgroup inclusion. 
We now prove a sequential analogue of \cite[Theorem 4.8]{baro2023sequential}.

\begin{theorem}\label{thm: lb-subgp-incl}
Let $\alpha: G\to H$ be a group epimorphism of type F groups. Then 
\[\TC_n[\alpha:G\to H]\geq \mathrm{cd}(G^n_H)-k(\alpha),\] where 
\[k(\alpha)=\mathrm{max}\{\mathrm{cd}(\cap_{i=1}^{n-1} C(k_i))\mid k_i\in \kr(\alpha)~ \& ~ k_i\neq 1_G,~ 1\leq i\leq n-1\}\] and $C(k_i)$ is the centralizer of an element $k_i$ in $G$. 
\end{theorem}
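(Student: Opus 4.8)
The plan is to deduce this theorem as a direct application of the sequential analogue of \cite[Theorem 3.10]{baro2023sequential}, which provides a lower bound for the sectional category of a subgroup inclusion in terms of the cohomological dimension of the ambient group minus the maximal cohomological dimension of an intersection of a conjugate of the subgroup with itself. Since $\TC_n[\alpha:G\to H]=\sct(\Delta_n:G\to G^n_H)$ by \Cref{def: ptchomo}, and $\Delta_n$ realizes the subgroup inclusion $\Delta_n(G)\hookrightarrow G^n_H$, the quantity we must control is $\mathrm{cd}(G^n_H)$ minus the largest $\mathrm{cd}(\Delta_n(G)\cap g\Delta_n(G)g^{-1})$ taken over $g\in G^n_H$ with $g\notin \Delta_n(G)$ (or more precisely over nontrivial double cosets). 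So the entire content reduces to computing these intersections $\Delta_n(G)\cap g\Delta_n(G)g^{-1}$ explicitly and identifying them, via an isomorphism, with the centralizer intersections $\cap_{i=1}^{n-1}C(k_i)$ appearing in the definition of $k(\alpha)$.

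The key computation proceeds as follows. First I would use \Cref{lem: alt-decr-fib-prod} to write a general element $g\in G^n_H$ as $g=(k_1,\dots,k_{n-1},1)\cdot\Delta_n(x)$ for $k_i\in\kr(\alpha)$ and $x\in G$; since conjugation by $\Delta_n(x)$ preserves $\Delta_n(G)$, the intersection $\Delta_n(G)\cap g\Delta_n(G)g^{-1}$ depends only on the tuple $(k_1,\dots,k_{n-1})$. Then I would compute directly: an element $\Delta_n(a)=(a,\dots,a)$ lies in $g\Delta_n(G)g^{-1}$ precisely when $g^{-1}\Delta_n(a)g=\Delta_n(b)$ for some $b\in G$, and writing this out coordinate-wise with $g=(k_1,\dots,k_{n-1},1)$ yields the conditions $k_i^{-1}ak_i=b$ for $1\le i\le n-1$ together with $a=b$ from the last coordinate. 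Combining these forces $k_i^{-1}ak_i=a$ for every $i$, i.e.\ $a\in\cap_{i=1}^{n-1}C(k_i)$. Hence $\Delta_n(G)\cap g\Delta_n(G)g^{-1}\cong\cap_{i=1}^{n-1}C(k_i)$, and the double coset is nontrivial exactly when the tuple $(k_1,\dots,k_{n-1})$ is not of the form making $g\in\Delta_n(G)$, which corresponds to the $k_i$ not all being trivial.

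Having established this isomorphism, I would take cohomological dimension and maximize over all admissible $g$, so that the maximal intersection dimension equals $k(\alpha)=\max\{\mathrm{cd}(\cap_{i=1}^{n-1}C(k_i))\mid k_i\in\kr(\alpha),\ k_i\neq 1_G\}$, and the lower bound $\TC_n[\alpha:G\to H]\ge\mathrm{cd}(G^n_H)-k(\alpha)$ follows immediately from the sequential subgroup-inclusion lower bound. The type $F$ hypothesis is what guarantees finiteness of the relevant complexes and validity of \Cref{lem: alt-decr-fib-prod}, and it ensures $\mathrm{cd}(G^n_H)$ is finite so the statement is meaningful.

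The main obstacle I anticipate is bookkeeping in the coordinate-wise computation of the conjugate-intersection, specifically correctly handling the asymmetry introduced by the last coordinate being $1$ in the \Cref{lem: alt-decr-fib-prod} normal form, and verifying that the indexing range $1\le i\le n-1$ for the $k_i$ matches exactly the $n-1$ centralizer conditions. A secondary subtlety is confirming that the maximization over nontrivial double cosets corresponds precisely to requiring the $k_i$ to be nontrivial kernel elements (rather than, say, requiring the whole tuple to be nonidentity), so that the definition of $k(\alpha)$ is reproduced faithfully; one must check that allowing some but not all $k_i$ to be trivial is already subsumed in the stated maximum, since trivial $k_i$ contribute $C(1_G)=G$ to the intersection and hence only enlarge it, which is consistent with taking a maximum of cohomological dimensions.
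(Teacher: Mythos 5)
Your proposal follows the paper's proof essentially verbatim: the paper likewise applies \cite[Theorem 3.10]{baro2023sequential} to the subgroup inclusion $\Delta_n(G)\hookrightarrow G^n_H$, normalizes $g$ via \Cref{lem: alt-decr-fib-prod} using the fact that conjugation by $\Delta_n(x)$ preserves $\Delta_n(G)$, and carries out the same coordinate-wise computation identifying $\Delta_n(G)\cap(\bar{k},1)\Delta_n(G)(\bar{k},1)^{-1}$ with $\cap_{i=1}^{n-1}C(k_i)$, then concludes $k(\alpha)=\kappa_{G^n_H,\Delta_n(G)}$. The mixed-tuple subtlety you flag (tuples $\bar{k}\neq\bar{1}$ with some $k_i=1_G$, which the paper also glosses over) is resolved not merely by monotonicity of the intersection but by replacing each trivial entry with a copy of some nontrivial entry of the tuple, which leaves $\cap_{i=1}^{n-1}C(k_i)$ unchanged and shows the maximum over $\bar{k}\in\kr(\alpha)^{n-1}\setminus\{\bar{1}\}$ equals the maximum over all-nontrivial tuples defining $k(\alpha)$.
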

\begin{proof}
Our argument closely follows that of \cite[Theorem 4.8]{baro2023sequential}.
We note the following inequality which follows from  \cite[Theorem 3.10]{baro2023sequential}: \[\TC_n[\alpha:G\to H]\geq \mathrm{cd}(G^n_H)-\kappa_{G^n_H,\Delta_n(G)},\] where     \[\kappa_{G^n_H,\Delta_n(G)}=\mathrm{max}\{\mathrm{cd}(\Delta_n(G)\cap g\Delta_n(G)g^{-1}) \mid g\in G^n_H\setminus \Delta_n(G)\}.\]
Therefore, it is enough to show that $k(\alpha)=\kappa_{G^n_H,G}$. 
From \Cref{lem: alt-decr-fib-prod}, we have 
\[\kappa_{G^n_H,\Delta_n(G)}=\mathrm{max}\{\mathrm{cd}(\Delta_n(G)\cap ((\bar{k},1)\cdot \tilde{h})\Delta_n(G)((\bar{k},1)\cdot \tilde{h})^{-1}) \mid \bar{k}\in \mathrm{ker}(\alpha)^{n-1}\setminus \bar{1}~ \&~ h\in G\},\]
where $\tilde{h}=\Delta_n(h)=(h,\dots,h)$ and $\bar{1}=(1,\dots,1)$ is the identity of $G^{n-1}$.
Since $\tilde{h}\Delta_n(G)\tilde{h}^{-1}=\Delta_n(G)$, we can rewrite the above description as 
\[\kappa_{G^n_H,\Delta_n(G)}=\mathrm{max}\{\mathrm{cd}(\Delta_n(G)\cap (\bar{k},1)\Delta_n(G)(\bar{k},1)^{-1}) \mid \bar{k}\in \mathrm{ker}(\alpha)^{n-1}\setminus \bar{1}\},\]
We now show that the group $\cap_{i=1}^{n-1} C(k_i)$ with  $ \bar{k}=(k_1,\dots,k_{n-1})\in \mathrm{ker}(\alpha)^{n-1}\setminus \bar{1}$ is isomorphic to $\Delta_n(G)\cap (\bar{k},1)\Delta_n(G)(\bar{k},1)^{-1}$.

Let $\tilde{g}=\Delta_n(g)$. Then note that $\tilde{g}\in (\bar{k},1)\Delta_n(G)(\bar{k},1)^{-1})$ if and only if $\tilde{g}=(\bar{k},1)\tilde{h}(\bar{k},1)^{-1}$ for some $\tilde{h}\in \Delta_n(G)$. 
This implies $g=k_ihk_i^{-1}$ and $g=h$. This implies $g=k_ihk_i^{-1}$. That is $k_ig=gk_i$ for $1\leq i\leq n-1$. This is equivalent to $g\in \cap_{i=1}^{n-1}C(k_i)$. This concludes $k(\alpha)=\kappa_{G^n_H,\Delta_n(G) }$.
\end{proof}

\begin{remark}
It would be interesting to know the relation between lower bounds obtained in \Cref{thm: lb-subgp-incl} and  \Cref{thm: lbtcnptc}. 
\end{remark}

For a group homomorphism $\alpha:G\to H$ of type $F$ groups, we describe $G^n_H$ as a semidirect product. This enables us to obtain the bounds on the cohomological dimension of $G^n_H$.

\begin{lemma}\label{lem: fib-prod-semidirect}
Let $\alpha:G\to H$ be a group epimorphism. Then \[\Psi:G^n_H\to (\kr(\alpha))^{n-1}\rtimes_{\phi} G ~\text{ defined by }~ \Psi(g_1,\dots,g_n)=(g_1g_n^{-1},\dots,g_{n-1}g_n^{-1},g_n)\] is a group isomorphism, where $\phi:G\to Aut((\kr(\alpha))^{n-1})$ is defined by \[\phi(g)(x_1,\dots,x_{n-1})=(gx_1g^{-1},\dots,gx_{n-1}g^{-1}).\]    
\end{lemma}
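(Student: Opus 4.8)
The plan is to exhibit an explicit inverse map and separately verify that $\Psi$ respects the two group operations; bijectivity together with the homomorphism property then yields the isomorphism. First I would record the semidirect-product law explicitly. Writing elements of $(\kr(\alpha))^{n-1}\rtimes_{\phi} G$ as pairs $(\bar{x},g)$ with $\bar{x}=(x_1,\dots,x_{n-1})\in(\kr(\alpha))^{n-1}$, the multiplication is $(\bar{x},g)(\bar{y},h)=(\bar{x}\cdot\phi(g)(\bar{y}),\,gh)$, whose $i$-th kernel-coordinate is $x_i\,g y_i g^{-1}$. Here $\phi$ genuinely lands in $\mathrm{Aut}((\kr(\alpha))^{n-1})$ precisely because $\kr(\alpha)$ is normal in $G$, so coordinatewise conjugation by $g$ preserves $(\kr(\alpha))^{n-1}$; the same observation shows $\phi$ is a homomorphism, so the semidirect product is well defined.

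Next I would check that $\Psi$ is well defined, i.e.\ that its image lands in the stated semidirect product. For $(g_1,\dots,g_n)\in G^n_H$ all the values $\alpha(g_i)$ agree, hence $\alpha(g_i g_n^{-1})=\alpha(g_i)\alpha(g_n)^{-1}=1$, so each entry $g_i g_n^{-1}$ indeed lies in $\kr(\alpha)$, as required for the first $n-1$ coordinates of $\Psi$.

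The substantive step is the homomorphism property. Recalling that the product in $G^n_H\leq G^n$ is coordinatewise, I would compute $\Psi\big((g_ih_i)_i\big)$ and $\Psi\big((g_i)_i\big)\Psi\big((h_i)_i\big)$ and compare. On one side the $i$-th kernel-coordinate of $\Psi\big((g_ih_i)_i\big)$ is $g_ih_i(g_nh_n)^{-1}=g_ih_ih_n^{-1}g_n^{-1}$, and the $G$-coordinate is $g_nh_n$. On the other side the semidirect law gives $i$-th coordinate $(g_ig_n^{-1})\,g_n(h_ih_n^{-1})g_n^{-1}=g_ih_ih_n^{-1}g_n^{-1}$ and $G$-coordinate $g_nh_n$; the two agree. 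This is the one computation demanding care, since it is where the twist $\phi$ must cancel correctly, but the collapse $g_n^{-1}g_n$ makes the match immediate, which is exactly why the twisting automorphism is chosen to be conjugation.

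Finally, bijectivity is cleanest through an explicit inverse: set $\Psi^{-1}\big((x_1,\dots,x_{n-1}),g\big)=(x_1g,\dots,x_{n-1}g,g)$. This lands in $G^n_H$ because $\alpha(x_ig)=\alpha(g)$ for every $i$ (as $x_i\in\kr(\alpha)$), so all coordinates share a common $\alpha$-image. A direct substitution shows $\Psi\circ\Psi^{-1}$ and $\Psi^{-1}\circ\Psi$ are both the identity. I expect no genuine obstacle in this argument; the only place requiring attention is aligning the semidirect-product multiplication with the coordinatewise multiplication of $G^n_H$ in the homomorphism check.
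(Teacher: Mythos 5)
Your proposal is correct and follows essentially the same route as the paper: the same homomorphism computation in which conjugation by $g_n$ cancels, and bijectivity via the map $((x_i),g)\mapsto(x_1g,\dots,x_{n-1}g,g)$, which is exactly the preimage the paper exhibits in its surjectivity argument (the paper checks injectivity and surjectivity separately rather than naming an inverse). Your additional verifications---that the image of $\Psi$ lands in $(\kr(\alpha))^{n-1}$, that the inverse lands in $G^n_H$, and that $\phi$ is well defined into $\mathrm{Aut}((\kr(\alpha))^{n-1})$ by normality of $\kr(\alpha)$---are details the paper leaves implicit, and they only strengthen the write-up.
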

\begin{proof}
We first prove that $\Psi$ is injective. Suppose $\Psi(g_1,\dots,g_n)=\Psi(h_1,\dots,h_n)$. Then  $(g_1g_n^{-1},\dots,g_{n-1}g_n^{-1},g_n)=(h_1h_n^{-1},\dots,h_{n-1}h_n^{-1},h_n)$, giving $g_n=h_n$ and $g_ig_n^{-1}=h_ih_n^{-1}$, which implies $g_i=h_i$ for $1\leq i\leq n$.
Given any $(g_1,\dots,g_n)\in (\kr(\alpha))^{n-1}\rtimes_{\phi} G$ we have $\Psi(g_1g_n,\dots,g_{n-1}g_n, g_n)=(g_1,\dots,g_n)$, which implies $\Psi$ is surjective.
We now show that $\Psi$ is a homomorphism. 
\begin{align*}
\Psi((g_1,\dots,g_n)\cdot (h_1,\dots,h_n))&=
   \Psi(g_1h_1,\dots,g_nh_n) \\
   &=(g_1h_1h_n^{-1}g_n^{-1},\dots,g_{n-1}h_{n-1}h_n^{-1}g_n^{-1},g_nh_n)\\
   &=(g_1g_n^{-1}g_nh_1h_n^{-1}g_n^{-1},\dots,g_{n-1}g_n^{-1}g_nh_{n-1}h_n^{-1}g_n^{-1},g_nh_n)\\
   &= ((g_1g_n^{-1},\dots, g_{n-1}g_n^{-1})\phi(g_n)(h_1h_n^{-1},\dots,h_{n-1}h_n^{-1}), g_nh_n)\\
   &= (g_1g_n^{-1},\dots, g_{n-1}g_n^{-1},g_n)\bullet (h_1h_n^{-1},\dots,h_{n-1}h_n^{-1},h_n)\\
   &=\Psi(g_1,\dots,g_n)\bullet \Psi(h_1,\dots,h_n),
\end{align*}
where $\bullet$ is the multiplication in the semidirect product.
\end{proof}

We now prove the sequential analogues of \cite[Corollary 4.10, Corollary 4.11]{baro2023sequential}.
\begin{corollary}\label{cor:ub-cd-ker}
Let $\alpha:G\to H$ be a group epimorphism of type F groups. Then
\[\TC_n[\alpha:G\to H]\leq \mathrm{cd}(G)+\mathrm{cd}((\kr(\alpha))^{n-1}).\]
\end{corollary}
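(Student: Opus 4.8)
The plan is to combine the semidirect product decomposition from \Cref{lem: fib-prod-semidirect} with the standard upper bound $\sct(\iota)\leq \cd(G^n_H)$ coming from the Eilenberg--Ganea theorem, together with a bound on the cohomological dimension of a semidirect product. First I would invoke \Cref{lem: fib-prod-semidirect} to obtain the isomorphism
\[G^n_H\cong (\kr(\alpha))^{n-1}\rtimes_{\phi} G.\]
Next I would use the general fact that $\TC_n[\alpha:G\to H]=\sct(\Delta_n:G\to G^n_H)\leq \ct(G^n_H)=\cd(G^n_H)$, where the last equality is the Eilenberg--Ganea theorem quoted earlier in the paper; the middle inequality holds since the sectional category of any map into a space is at most the LS-category of that target.

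The main step is then to bound $\cd(G^n_H)$. Since $G^n_H$ fits into a short exact sequence
\[1\to (\kr(\alpha))^{n-1}\to G^n_H\to G\to 1,\]
I would apply the standard subadditivity of cohomological dimension for group extensions, namely $\cd(G^n_H)\leq \cd((\kr(\alpha))^{n-1})+\cd(G)$. This immediately yields
\[\TC_n[\alpha:G\to H]\leq \cd(G^n_H)\leq \cd(G)+\cd((\kr(\alpha))^{n-1}),\]
which is the claimed inequality.

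The hypothesis that $G$ and $H$ are type $F$ groups guarantees finiteness (finite cohomological dimension), so the quantities involved are well-defined and finite, and it also ensures $\kr(\alpha)$ is of type $F$ (as in \cite{baro2023sequential}), so that $\cd((\kr(\alpha))^{n-1})$ is finite. The step I expect to be the only real point requiring care is the subadditivity of $\cd$ under extensions: I would cite the standard result (e.g. from Bieri or Brown's cohomology of groups) that for a short exact sequence $1\to K\to \Gamma\to Q\to 1$ one has $\cd(\Gamma)\leq \cd(K)+\cd(Q)$. Everything else is a direct assembly of previously established facts, so the proof should be short.
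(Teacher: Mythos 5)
Your proposal is correct and follows essentially the same route as the paper: the semidirect product decomposition of $G^n_H$ from \Cref{lem: fib-prod-semidirect}, the chain $\TC_n[\alpha:G\to H]\leq \ct(G^n_H)=\cd(G^n_H)$ via Eilenberg--Ganea, and subadditivity of $\cd$ for the extension $1\to (\kr(\alpha))^{n-1}\to G^n_H\to G\to 1$ (the paper cites \cite[VIII.2.4(b)]{B} for semidirect products, a special case of the extension bound you invoke). One minor caveat: your side remark that $G$ and $H$ being of type $F$ ensures $\kr(\alpha)$ is of type $F$ is false in general (e.g.\ Bestvina--Brady kernels), but it is not needed, since $\cd(\kr(\alpha))\leq \cd(G)<\infty$ already follows from $\kr(\alpha)$ being a subgroup of $G$.
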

\begin{proof}
Since $\TC_n[\alpha:G\to H]=\sct(\Delta_n: G\to G^n_H)$, we have $\TC_n[\alpha:G\to H]\leq \ct(G^n_H)$. From \cite{EilenbergGanea}, for type F groups, we have $\ct(G^n_H)=\mathrm{cd}(G^n_H)$. This gives
\[\TC_n[\alpha:G\to H]\leq \mathrm{cd}(G^n_H).\]
It follows from \cite[VIII.2.4(b)]{B} that the cohomological dimension of the semidirect product of two groups is at most the sum of the cohomological dimensions of the factors. 
Then from \Cref{lem: fib-prod-semidirect} we have
$\mathrm{cd}(G^n_H)\leq \mathrm{cd}(G)+\mathrm{cd}((\kr(\alpha))^{n-1})$. This concludes the proof.
\end{proof}

\begin{corollary}
Let $\alpha:G\to H$ be a group epimorphism of type F groups with $\kr(\alpha))$ is also type F. Let $d=\mathrm{
cd}(\kr(\alpha))$ such that $H^d(\kr(\alpha); \Z[\kr(\alpha)])$ is $\Z$-free. Then 
\[n\cdot\cd(G)-(n-1)\cdot\cd(H)-k(\alpha)\leq\TC_n[\alpha: G\to H]\leq n\cdot\cd(G)-(n-1)\cdot\cd(H),\]
where \[k(\alpha)=\mathrm{max}\{\mathrm{cd}(\cap_{i=1}^{n-1} C(k_i))\mid (k_1,\dots,k_{n-1})\in (\mathrm{ker} \ \alpha)^{n-1}~ \& ~ k_i\neq 1 ~\text{ for }~ 1\leq i\leq n-1\}\] and $C(k_i)$ is a centralizer of an element $k_i$.
\end{corollary}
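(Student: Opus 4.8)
The plan is to reduce both inequalities to the single computation
\[\cd(G^n_H)=n\cdot\cd(G)-(n-1)\cdot\cd(H).\]
Granting this identity, the upper bound is immediate from the inequality $\TC_n[\alpha:G\to H]\le\cd(G^n_H)$ established in the proof of \Cref{cor:ub-cd-ker}, and the lower bound is immediate from \Cref{thm: lb-subgp-incl}, since the quantity $k(\alpha)$ in the present statement (a maximum over tuples $(k_1,\dots,k_{n-1})\in(\kr(\alpha))^{n-1}$ with each $k_i\neq 1$) is verbatim the same as the $k(\alpha)=\kappa_{G^n_H,\Delta_n(G)}$ produced there. Thus the entire content of the corollary lies in evaluating $\cd(G^n_H)$.

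To compute $\cd(G^n_H)$, I would exploit the semidirect-product decomposition $G^n_H\cong(\kr(\alpha))^{n-1}\rtimes_{\phi}G$ from \Cref{lem: fib-prod-semidirect}, which gives the short exact sequence
\[1\to(\kr(\alpha))^{n-1}\to G^n_H\to G\to 1.\]
The general submultiplicativity $\cd(G^n_H)\le\cd((\kr(\alpha))^{n-1})+\cd(G)$ used already in \Cref{cor:ub-cd-ker} only yields the upper bound, so for equality I would invoke the additivity criterion for the cohomological dimension of an extension $1\to K\to G'\to Q\to 1$: if $K$ is of type $FP$ with $\cd(K)=d$ and $H^d(K;\Z[K])$ is $\Z$-free, then $\cd(G')=\cd(K)+\cd(Q)$. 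This is the standard Bieri-type result (the same input used for \cite[Corollary 4.11]{baro2023sequential}), proved by running the Lyndon--Hochschild--Serre spectral sequence with coefficients in $\Z[G']$: the $\Z$-freeness makes the top-degree coefficient module $H^d(K;\Z[G'])$ free over $\Z[Q]$, so the corner term $E_2^{\cd(Q),d}=H^{\cd(Q)}(Q;H^d(K;\Z[G']))$ is nonzero, survives to $E_\infty$, and forces $H^{\cd(Q)+d}(G';\Z[G'])\neq 0$.

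The computation then proceeds in three additive steps, each an instance of this criterion, all fed by the hypothesis that $H^d(\kr(\alpha);\Z[\kr(\alpha)])$ is $\Z$-free. First, by the Künneth theorem the top-degree group $H^{(n-1)d}((\kr(\alpha))^{n-1};\Z[(\kr(\alpha))^{n-1}])$ is the $(n-1)$-fold tensor power of $H^d(\kr(\alpha);\Z[\kr(\alpha)])$ (no $\mathrm{Tor}$ terms appear in top degree), hence is again $\Z$-free; in particular $\cd((\kr(\alpha))^{n-1})=(n-1)d$ and the extension hypothesis persists for the normal subgroup $(\kr(\alpha))^{n-1}$. Second, applying the criterion to the displayed extension gives $\cd(G^n_H)=(n-1)d+\cd(G)$. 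Third, applying it to $1\to\kr(\alpha)\to G\to H\to 1$ gives $\cd(G)=d+\cd(H)$, i.e.\ $d=\cd(G)-\cd(H)$. Substituting, $\cd(G^n_H)=(n-1)(\cd(G)-\cd(H))+\cd(G)=n\cdot\cd(G)-(n-1)\cdot\cd(H)$, and combining with the two bounds recorded in the first paragraph completes the proof.

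The main obstacle is ensuring that the $\Z$-freeness hypothesis is genuinely the right and sufficient input for all three additivity steps, and in particular that it propagates correctly to the power $(\kr(\alpha))^{n-1}$: one must check that in top cohomological degree the Künneth formula contributes only the tensor power and no residual $\mathrm{Tor}$ summand, so that the resulting module is both nonzero and $\Z$-free. Once this persistence is verified, the two genuine extensions are handled by the same spectral-sequence corner argument, and everything else is bookkeeping with the identifications supplied by \Cref{lem: fib-prod-semidirect}.
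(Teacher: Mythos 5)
Your proof is correct, and its overall skeleton coincides with the paper's: both get the upper bound from $\TC_n[\alpha:G\to H]\leq \cd(G^n_H)$ as in \Cref{cor:ub-cd-ker}, the lower bound from \Cref{thm: lb-subgp-incl}, and both evaluate $\cd(G^n_H)$ through the semidirect-product extension $1\to(\kr(\alpha))^{n-1}\to G^n_H\to G\to 1$ supplied by \Cref{lem: fib-prod-semidirect} together with Bieri's additivity theorem \cite[Theorem 5.5(i)]{Bieri}. Where you genuinely diverge is in the sub-computation $\cd((\kr(\alpha))^{n-1})=(n-1)\bigl(\cd(G)-\cd(H)\bigr)$: the paper obtains this by applying Bieri's theorem to the power extension $1\to(\kr(\alpha))^{n-1}\to G^{n-1}\to H^{n-1}\to 1$ and then invoking the product formula \cite[Corollary 2.5]{cdprod} to identify $\cd(G^{n-1})=(n-1)\cd(G)$ and $\cd(H^{n-1})=(n-1)\cd(H)$, whereas you apply the additivity criterion once to $1\to\kr(\alpha)\to G\to H\to 1$ to get $d=\cd(G)-\cd(H)$ and compute $\cd((\kr(\alpha))^{n-1})=(n-1)d$ directly via the K\"unneth theorem. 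Your route buys two things: it dispenses with the external product-formula citation (additivity of $\cd$ under direct products is not automatic, so this is a real saving), and your K\"unneth step explicitly verifies that $H^{(n-1)d}\bigl((\kr(\alpha))^{n-1};\Z[(\kr(\alpha))^{n-1}]\bigr)$ is nonzero and $\Z$-free --- a hypothesis that the paper's own application of \cite[Theorem 5.5(i)]{Bieri} to the extension with kernel $(\kr(\alpha))^{n-1}$ tacitly requires but never checks. The paper's route, in exchange, is shorter on the page and leans on ready-made results; the numerical outcome $\cd(G^n_H)=n\cdot\cd(G)-(n-1)\cdot\cd(H)$ is the same either way.
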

\begin{proof}
Since $\kr(\alpha)$ is of type $F$, we have a short exact sequence of type F groups
\[\begin{tikzcd}
1\arrow{r}& (\kr(\alpha))^{n-1} \arrow{r} & G^{n-1}\arrow{r}{\alpha^{n-1}}& H^{n-1} \arrow{r} &1.
\end{tikzcd}\]
 Thus, it follows from \cite[Theorem 5.5 (i)]{Bieri} and \cite[Corollary 2.5]{cdprod} that $\cd((\kr(\alpha))^{n-1})=(n-1)\cdot\cd(G)-(n-1)\cdot\cd(H)$.
 Therefore, the upper bound on $\TC_n[\alpha: G\to H]$ follows from \Cref{cor:ub-cd-ker}.
 We derive the lower bound using similar techniques. Observe that from \Cref{lem: fib-prod-semidirect} and again from \cite[Theorem 5.5 (i)]{Bieri}, we have $\cd(G^n_H)=\cd(G)+\cd((\kr(\alpha))^{n-1})$. 
 Now, we can use the expression $\cd((\kr(\alpha))^{n-1})=(n-1)\cdot\cd(G)-(n-1)\cdot\cd(H)$ and \Cref{thm: lb-subgp-incl} to get the desired lower bound on $\TC_n[\alpha: G\to H]$.
\end{proof}

The following result generalizes \cite[Theorem 5.1]{GrantPTC}.
\begin{theorem}\label{thm: sptcub}
Let $\alpha: G\to H$ be an epimorphism of groups. Suppose for a normal subgroup $N$ of $G$, we have  $[N,\kr(\alpha)]=\{1\}$. Then $\Delta_n(N)$ is a normal subgroup of $G^n_H$, and 
\[\TC_n[\alpha: G\to H]\leq \mathrm{cd}\bigg(\frac{G^n_H}{\Delta_n(N)}\bigg).\]
\end{theorem}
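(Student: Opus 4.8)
The plan is to prove the statement in two parts: first the normality claim, then the cohomological dimension bound. For normality, I would take an arbitrary element $(g_1,\dots,g_n)\in G^n_H$ and an arbitrary $\Delta_n(x)=(x,\dots,x)$ with $x\in N$, and compute the conjugate $(g_1,\dots,g_n)\Delta_n(x)(g_1,\dots,g_n)^{-1}=(g_1xg_1^{-1},\dots,g_nxg_n^{-1})$. Since $N$ is normal in $G$, each coordinate $g_ixg_i^{-1}$ lies in $N$; the work is to show all coordinates are equal, so that the conjugate is again of the form $\Delta_n(y)$ for some $y\in N$. Here I would use the fibre-product condition $\alpha(g_i)=\alpha(g_j)$, which gives $g_i^{-1}g_j\in\kr(\alpha)$, together with the hypothesis $[N,\kr(\alpha)]=\{1\}$: writing $g_j=g_i k$ with $k\in\kr(\alpha)$, one gets $g_jxg_j^{-1}=g_i(kxk^{-1})g_i^{-1}=g_i x g_i^{-1}$ because $k$ commutes with $x\in N$. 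This shows $\Delta_n(N)\trianglelefteq G^n_H$.

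For the bound, I would invoke the chain of inequalities already set up in the paper: $\TC_n[\alpha:G\to H]=\sct(\Delta_n:G\to G^n_H)$, and since $\Delta_n(N)$ is now known to be normal, I would factor the diagonal inclusion through the quotient. The key is to realize that passing to the quotient $G^n_H/\Delta_n(N)$ corresponds, on the level of classifying spaces, to a fibration whose total space realizes the quotient homomorphism, and that the section problem defining $\TC_n$ can be pushed down to this quotient. Concretely, I expect the argument to mirror Grant's proof of \cite[Theorem 5.1]{GrantPTC}: one observes that $\sct(\Delta_n:G\to G^n_H)$ is bounded above by the sectional category of the induced map into the quotient, which in turn is at most $\ct(G^n_H/\Delta_n(N))=\cd(G^n_H/\Delta_n(N))$ by the Eilenberg--Ganea theorem \cite{EilenbergGanea} (the quotient being of type $F$ or at least having well-defined cohomological dimension). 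The composite $G\xrightarrow{\Delta_n} G^n_H\to G^n_H/\Delta_n(N)$ should be analyzed to see that the relevant section over the quotient pulls back to a section for $\Pi_n$, yielding the inequality.

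The main obstacle I anticipate is the second part: correctly translating the quotient by $\Delta_n(N)$ into a statement about sectional category at the level of realized fibrations, and justifying that $\sct(\Delta_n)\leq\ct(G^n_H/\Delta_n(N))$. This requires understanding how the quotient map $q:G^n_H\to G^n_H/\Delta_n(N)$ interacts with the diagonal, and verifying that the composite $q\circ\Delta_n$ factors in a way that makes its sectional category controlled by the LS-category of the target. I would need to check that $\Delta_n(N)\cap\Delta_n(G)$ behaves well and that the induced map on classifying spaces gives a fibration realizing $q$, so that a categorical cover of $K(G^n_H/\Delta_n(N),1)$ lifts to a sectional cover for the diagonal inclusion. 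The normality part is essentially a direct computation and should present no difficulty beyond the bookkeeping described above.
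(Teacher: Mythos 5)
Your first half is correct and is exactly the paper's argument: for $\bar g=(g_1,\dots,g_n)\in G^n_H$ and $x\in N$ you write $g_j=g_ik$ with $k=g_i^{-1}g_j\in\kr(\alpha)$, use $[N,\kr(\alpha)]=\{1\}$ to see that all coordinates of $\bar g\,\Delta_n(x)\,\bar g^{-1}$ coincide, and use normality of $N$ in $G$ to see the common value lies in $N$. (Your write-up of this is in fact cleaner than the paper's, which has notational slips between $N$ and $I$.)

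The second half, however, has a genuine gap, which you yourself flag as "the main obstacle": you never supply the mechanism for $\sct(\Delta_n\colon G\to G^n_H)\leq \ct\big(G^n_H/\Delta_n(N)\big)$, and the candidate mechanisms you float point the wrong way. Analyzing the composite $q\circ\Delta_n\colon G\to W:=G^n_H/\Delta_n(N)$ cannot close the argument: that composite realizes a homomorphism which is in general neither trivial nor surjective (its image is $\Delta_n(G)/\Delta_n(N)$), so no direct comparison of sectional categories along $q$ applies; and the condition "$\Delta_n(N)\cap\Delta_n(G)$ behaves well" is vacuous, since $\Delta_n(N)\subseteq\Delta_n(G)$ trivially. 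The missing idea, which is the crux of the paper's proof (and of Grant's original Theorem 5.1), concerns the \emph{fibre inclusion}, not the composite: realize the extension $1\to N\xrightarrow{\Delta_n|_N} G^n_H\to W\to 1$ by a fibration $K(N,1)\xrightarrow{i}K(G^n_H,1)\xrightarrow{p}K(W,1)$, and observe that $i$ factors up to homotopy through the map $f\colon K(G,1)\to K(G^n_H,1)$ realizing $\Delta_n$, via the map realizing the inclusion $N\hookrightarrow G$ --- precisely because $\Delta_n|_N=\Delta_n\circ(N\hookrightarrow G)$. This factorization is the hypothesis of \cite[Lemma 2.2]{GrantPTC}, which then yields $\sct(f)\leq\ct(K(W,1))$: over a categorical open set $U\subseteq K(W,1)$ the restriction $p^{-1}(U)$ is fibre-homotopy trivial, and a homotopy section of $f$ over $p^{-1}(U)$ is obtained by projecting to the fibre $K(N,1)$ and mapping into $K(G,1)$, the factorization triangle being exactly what makes this a homotopy section. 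With that lemma identified, the conclusion follows as you say from $\TC_n[\alpha\colon G\to H]=\sct(f)$ and $\ct(W)=\cd(W)$ by \cite{EilenbergGanea}; without it, your outline does not close.
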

\begin{proof}
 We first show that $\Delta_n(N)$ is normal in $G^n_H$. Let $\bar{g}=(g_1,\dots,g_n)\in G^n_H$ and $\bar{ a}=(a,\dots,a)\in \Delta_n(I)$. Then $\bar{g}\bar{a}\bar{g}^{-1}=(g_1ag_1^{-1},\dots, g_nag_n^{-1})$.   
 Note that $g_j^{-1}g_i\in N$. Now consider $h^{-1}(g_j^{-1}g_i)h(g_j^{-1}g_i)^{-1}\in [I,\mathrm{ker} \ \alpha]=\{e\}$. 
 This gives  $g_iag_i^{-1}=g_jag_j^{-1}$ for all $1\leq i,j\leq n$. Therefore, $\bar{g}\bar{a}\bar{g}^{-1}\in \Delta_n(N)$. Thus $\Delta_n(N)$ is normal in $G^n_H$.

We denote the quotient $G^n_H/\Delta_n(N)$ by $W$.
Now consider the following diagram 
\[\begin{tikzcd}
    1 \arrow{r} & N \arrow{r}{\Delta_n|_{N}} \arrow[dr, hook] & G^n_H \arrow{r} & W \arrow{r} & 1\\%
    & & G \arrow{u}{\Delta_n}
\end{tikzcd}\]
and the corresponding diagram of aspherical spaces
\[\begin{tikzcd}
   & K(N,1) \arrow{r}{i} \arrow[dr] & K(G^n_H,1) \arrow{r}{p} & K(W,1)  & \\%
    & & K(G,1) \arrow{u}{f},
\end{tikzcd}\]
where the horizontal row is a fibration.  It follows from \cite[Lemma 2.2]{GrantPTC} that $\sct(f)\leq \ct(W)$. Thus, we obtain the desired inequality, as $\sct(f)=\TC_n[\alpha:G\to H]$ and $\ct(W)=\mathrm{cd}(W)$.
\end{proof}

The following result is a sequential analogue of \cite[Proposition 3.7]{grantfibsymm}.
\begin{corollary}
 Let $G$ be a torsion-free discrete group, and let $Z$ be its centre. Then   
 \[\TC_n(G)\leq \ct(G^n/\Delta_n(Z)).\]
\end{corollary}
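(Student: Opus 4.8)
The plan is to deduce this directly from \Cref{thm: sptcub} by specializing to the trivial epimorphism. First I would take $\alpha: G\to \{1_G\}$ to be the trivial epimorphism, so that by \Cref{ex: tcnG} we have $\TC_n(G)=\TC_n[\alpha: G\to \{1_G\}]$. For this choice, $\kr(\alpha)=G$, and since the defining condition $\alpha(g_s)=\alpha(g_t)$ on the fibre product is vacuous, the group $G^n_{\{1_G\}}$ is simply the full product $G^n$.

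Next I would verify the hypotheses of \Cref{thm: sptcub} with $N=Z$. The centre $Z$ is a normal subgroup of $G$, and because $Z$ is central we have $[Z,\kr(\alpha)]=[Z,G]=\{1\}$. Thus \Cref{thm: sptcub} applies and yields that $\Delta_n(Z)$ is normal in $G^n$ together with the bound
\[\TC_n[\alpha: G\to \{1_G\}]\leq \cd\!\left(\frac{G^n}{\Delta_n(Z)}\right).\]

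Finally, combining the previous displays with the Eilenberg--Ganea equality $\cd(W)=\ct(W)$ applied to $W=G^n/\Delta_n(Z)$ gives $\TC_n(G)\leq \ct(G^n/\Delta_n(Z))$, as desired. There is essentially no serious obstacle here: the full content is carried by \Cref{thm: sptcub}, and the only points requiring care are the identification $G^n_{\{1_G\}}=G^n$ and the elementary observation that $[Z,G]=\{1\}$ since $Z$ is the centre. The torsion-free hypothesis enters only in the last step, where it guarantees that the relevant cohomological dimensions are finite and that the passage from $\cd$ to $\ct$ via Eilenberg--Ganea is unproblematic.
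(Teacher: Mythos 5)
Your proof is correct and takes essentially the same route as the paper, which proves this corollary in one line by specializing \Cref{thm: sptcub} to the trivial epimorphism $H=e$ with $N=Z$, exactly as you do. (One small aside in your write-up is off: torsion-freeness does not guarantee finiteness of the relevant cohomological dimensions, and the Eilenberg--Ganea identification $\ct=\cd$ invoked in the last step needs no such finiteness; this does not affect the validity of the argument.)
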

\begin{proof}
 The proof follows by putting $H=e$ and $I=Z$ in \Cref{thm: sptcub}.   
\end{proof}

\begin{corollary}
If the extension 
\[\begin{tikzcd}
   e \arrow{r}& N\arrow{r} & G\arrow{r}{\alpha}& H \arrow{r} & e
\end{tikzcd}\] is central, then $\TC_n[\alpha:G\to H]=\mathrm{cd}(N^{n-1})$.
\end{corollary}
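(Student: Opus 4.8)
The plan is to prove the two inequalities $\TC_n[\alpha: G\to H]\le \cd(N^{n-1})$ and $\TC_n[\alpha: G\to H]\ge \cd(N^{n-1})$ separately, where $N=\kr(\alpha)$; centrality of the extension means precisely that $N\subseteq Z(G)$, equivalently $[G,N]=\{1\}$, and this hypothesis will enter only in the upper bound. Throughout I would exploit the semidirect-product description $\Psi\colon G^n_H\xrightarrow{\cong}(\kr\alpha)^{n-1}\rtimes_\phi G$ from \Cref{lem: fib-prod-semidirect}, noting that since every $k\in N$ is central the conjugation action $\phi(g)(x_1,\dots,x_{n-1})=(gx_1g^{-1},\dots,gx_{n-1}g^{-1})$ is trivial, so in fact $G^n_H\cong N^{n-1}\times G$.

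For the \textbf{upper bound} the key move is to apply \Cref{thm: sptcub} not with the normal subgroup $\kr(\alpha)$ but with the normal subgroup $G$ itself. This is legitimate because $G$ is normal in $G$ and $[G,\kr\alpha]=[G,N]=\{1\}$ by centrality, so \Cref{thm: sptcub} yields $\TC_n[\alpha: G\to H]\le \cd(G^n_H/\Delta_n(G))$. It then remains to identify this quotient. Under $\Psi$ one computes $\Psi(g,\dots,g)=(1,\dots,1,g)$, so $\Psi$ carries $\Delta_n(G)$ isomorphically onto the subgroup $\{1\}^{n-1}\times G$ of $N^{n-1}\times G$; dividing out this direct factor gives $G^n_H/\Delta_n(G)\cong N^{n-1}$, and hence $\TC_n[\alpha: G\to H]\le \cd(N^{n-1})$.

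For the \textbf{lower bound} I would apply \Cref{thm: lbtcnptc} (equivalently \Cref{cor: cd-lb} with $A_1=\dots=A_{n-1}=\kr(\alpha)$ and $A_n=\{1\}$) to the subgroup $L:=N^{n-1}\times\{1\}\le G^n_H$, which is a genuine subgroup of $G^n_H$ because each coordinate lies in $\kr(\alpha)$, and which satisfies $\cd(L)=\cd(N^{n-1})$. The hypothesis of \Cref{thm: lbtcnptc} is verified by a direct coordinate chase: if $\bar a=(a_1,\dots,a_{n-1},1)\in L$ is conjugate in $G^n_H$ to some $(c,\dots,c)\in\Delta_n(G)$, say $g\bar a g^{-1}=(c,\dots,c)$ with $g=(g_1,\dots,g_n)$, then comparing last coordinates forces $c=g_n\cdot 1\cdot g_n^{-1}=1$, whereupon $g_ia_ig_i^{-1}=1$ gives $a_i=1$ for all $i$; thus $L$ meets every subgroup conjugate to $\Delta_n(G)$ trivially. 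Consequently $\TC_n[\alpha: G\to H]\ge\cd(L)=\cd(N^{n-1})$. Note that this half uses only that $N=\kr(\alpha)$ and not centrality.

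Combining the two bounds gives the asserted equality. The only genuinely delicate point — the main obstacle — is the upper bound: one must notice that \Cref{thm: sptcub} should be fed the whole group $G$ as its normal subgroup (this is exactly what centrality makes admissible), and then see the semidirect action collapse so that $G^n_H/\Delta_n(G)$ degenerates to the single direct factor $N^{n-1}$. The lower bound, by contrast, is a routine verification against the criterion of \Cref{thm: lbtcnptc}.
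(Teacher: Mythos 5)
Your proof is correct, and your upper bound is exactly the paper's: the paper also applies \Cref{thm: sptcub} with the normal subgroup taken to be all of $G$ (centrality gives $[G,\kr(\alpha)]=\{1\}$, hence $\Delta_n(G)$ is normal in $G^n_H$) and identifies $G^n_H/\Delta_n(G)\cong N^{n-1}$ via $[(g_1,\dots,g_n)]\mapsto (g_1g_n^{-1},\dots,g_{n-1}g_n^{-1})$, which is precisely your $\Psi$ from \Cref{lem: fib-prod-semidirect} followed by forgetting the last coordinate. The genuine divergence is in the lower bound. The paper never touches \Cref{thm: lbtcnptc} here: it invokes \Cref{lem: tcnN} to get $\TC_n(N)\le \TC_n[\alpha:G\to H]$ and then concludes via the chain $\cd(N^{n-1})=\ct(N^{n-1})\le \TC_n(N)$, combining Eilenberg--Ganea with the inequality $\ct(X^{n-1})\le \TC_n(X)$ from \cite{gonzalezhighertc} quoted in the introduction. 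Your alternative---verifying the conjugacy hypothesis of \Cref{thm: lbtcnptc} directly for $L=N^{n-1}\times\{1\}\le G^n_H$ (equivalently \Cref{cor: cd-lb} with $A_1=\dots=A_{n-1}=\kr(\alpha)$ and $A_n=\{1\}$)---checks out: $L$ is indeed a subgroup of $G^n_H$ because every coordinate lies in $\kr(\alpha)$, and the coordinate chase is sound, since the last entry of $g\bar{a}g^{-1}$ being $g_n\cdot 1\cdot g_n^{-1}=1$ forces $c=1$ and hence each $a_i=1$. As you note, neither lower-bound argument uses centrality, so both in fact yield $\TC_n[\alpha:G\to H]\ge \cd\bigl((\kr(\alpha))^{n-1}\bigr)$ for an arbitrary epimorphism; the trade-off is that the paper's route is shorter but outsources the work to fibrewise results on sequential topological complexity of spaces (the fibre inequality behind \Cref{lem: tcnN} and the category--$\TC_n$ estimate), whereas yours stays entirely inside the algebraic conjugation criterion developed in the paper, making the corollary self-contained and showing that \Cref{thm: lbtcnptc} alone is sharp in the central case.
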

\begin{proof}
 Since $N$ is central, $[G,N]=\{e\}$. Therefore, $\Delta_n(G)$ is normal in $G^n_H$.
 Note that the map \[\frac{G^n_H}{\Delta_n(G)}\to N^{n-1}~ \text{defined by}~  [(g_1,\dots,g_n)]\to (g_1g_n^{-1},\dots, g_{n-1}g_n^{-1})\] is an isomorphism of groups.
Then from \Cref{thm: sptcub} we have \[\TC_n[\alpha:G\to H]\leq \mathrm{cd}(N^{n-1}).\]
It follows from \Cref{lem: tcnN} that $\TC_n(N)\leq \TC_n[\alpha:G\to H]$. Then we get the desired equality $\mathrm{cd}(N^{n-1})=\ct(N^{n-1})\leq \TC_n(N)$ using the fact $\mathrm{cd}(N)=\ct(N)$ from \cite{EilenbergGanea}.
\end{proof}

\section{Computations for planar Fadell–Neuwirth fibrations}\label{sec: sptcFNfibrations}
In this section, we give an alternate computation of the sequential parametrized topological complexity of planar Fadell-Neuwirth fibrations.

The ordered configuration space of $s$ points on $N$ is denoted by $F(N,s)$ and defined as
\[F(N,s)=\{(x_1,\dots,x_s)\in N^s \mid x_i\neq x_j ~\text{ for }~ i\neq j\}.\]

\begin{definition}[{\cite{FadellNeuwirth}}\label{def:F-N -fibrations}]
The maps \[p: F(N,s+t)\to F(N,s) ~\text{ defined by }~ p(x_1,\dots,x_{s+t})=(x_1,\dots,x_s)\] are called Fadell-Neuwirth fibrations.    
\end{definition}

The sequential parametrized topological complexity of these fibrations when $N=\R^2$ has been computed in \cite{SequentialPTC}. 
They showed the following.
\begin{theorem}[{\cite[Theorem 9.2]{SequentialPTC}}\label{thm: sptcFadNeu}]
Let $s \geq  2$, $t \geq 1$ and $n\geq 2$.
Then \[\TC_n[p:F(\R^{2},s+t)\to F(\R^{2},s)]=nt+s-2.\]
\end{theorem}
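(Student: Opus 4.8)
The plan is to realize the planar Fadell--Neuwirth fibration $p:F(\R^2,s+t)\to F(\R^2,s)$ at the level of fundamental groups and then apply the group-epimorphism machinery developed in the preceding sections. First I would observe that $F(\R^2,m)$ is a $K(\pi,1)$ space for the pure braid group $P_m$, so $p$ is a $0$-connected fibration realizing an epimorphism $\alpha:P_{s+t}\twoheadrightarrow P_s$ whose kernel is $\pi_1$ of the fibre $F(\R^2\setminus\{s\text{ points}\},t)$, a pure braid group of $t$ strands in a punctured plane. By \Cref{thm: sptcc of realized fib} we have $\TC_n[p:F(\R^2,s+t)\to F(\R^2,s)]=\TC_n[\alpha:P_{s+t}\to P_s]$, so the problem reduces entirely to a computation with the epimorphism $\alpha$.

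Next I would pin down the upper and lower bounds separately and show they coincide at $nt+s-2$. For the upper bound, I expect to use \Cref{cor:ub-cd-ker} (or the refined two-sided estimate following it), which gives $\TC_n[\alpha:G\to H]\le n\cdot\cd(G)-(n-1)\cdot\cd(H)$ once one knows the short exact sequence consists of type $F$ groups with the freeness hypothesis on top cohomology. Here $\cd(P_{s+t})=s+t-1$ and $\cd(P_s)=s-1$, so $n(s+t-1)-(n-1)(s-1)=nt+s-1$; this is one too large, so the naive bound must be sharpened, and I would instead compute $\cd(G^n_H)$ directly via the semidirect product description of \Cref{lem: fib-prod-semidirect}, using $\cd((\kr\alpha)^{n-1})$ together with the explicit cohomological dimension of the fibre configuration space, to get the exact value $nt+s-2$. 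For the lower bound I would invoke \Cref{thm: lb-subgp-incl}, which yields $\TC_n[\alpha:P_{s+t}\to P_s]\ge \cd(G^n_H)-k(\alpha)$; the task is then to show $k(\alpha)$ vanishes (or is exactly $1$, matching the discrepancy above) by analyzing centralizers $\cap_{i=1}^{n-1}C(k_i)$ of nontrivial kernel elements inside $P_{s+t}$.

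The main obstacle will be the careful bookkeeping of cohomological dimensions, specifically showing that $\cd(G^n_H)=nt+s-2+k(\alpha)$ in a way that matches the two bounds exactly, rather than leaving the gap of $1$ that the crude estimates produce. This hinges on a precise evaluation of $k(\alpha)=\max\{\cd(\cap_{i=1}^{n-1}C(k_i))\}$, which requires understanding centralizers of pure-braid kernel elements in the punctured-plane configuration space; the key geometric input is that a nontrivial element of $\kr(\alpha)$ corresponds to a braiding of at least one of the last $t$ strands, forcing its centralizer to drop dimension by the expected amount. I would then assemble the pieces: the realization step is routine, the semidirect-product computation of $\cd(G^n_H)$ is the technical heart, and matching $\cd(G^n_H)-k(\alpha)$ with the upper bound gives the stated equality $\TC_n[p:E\to B]=nt+s-2$, recovering \Cref{thm: sptcFadNeu} by purely group-theoretic means.
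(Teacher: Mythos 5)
Your reduction to the epimorphism $\alpha\colon P_{s+t}\to P_s$ via \Cref{thm: sptcc of realized fib} is correct and matches the paper, but both halves of your plan run into genuine obstructions. On the upper-bound side, computing $\cd(G^n_H)$ ``directly'' cannot produce $nt+s-2$: by \Cref{lem: fib-prod-semidirect} we have $G^n_H\cong(\kr(\alpha))^{n-1}\rtimes_\phi P_{s+t}$ with $\kr(\alpha)=\bar{P}_{t,s}$ a duality group of dimension $t$, and since $P_{s+t}$ is a duality group of dimension $s+t-1$, the extension forces $\cd(G^n_H)=(n-1)t+(s+t-1)=nt+s-1$ exactly. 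So the inequality $\TC_n[\alpha]\leq\cd(G^n_H)$ is genuinely not sharp here, and no amount of bookkeeping in the semidirect-product description will lower it. The missing idea is \Cref{thm: sptcub}: the centre $Z\cong\Z$ of $P_{s+t}$ (generated by the full twist) satisfies $[Z,\kr(\alpha)]=\{1\}$, and an induction on pullback extensions with fibre $\bar{P}_{t,s}$, using \Cref{lemms: cd}, gives $\cd\bigl({}_{P_s}P_{s+t}^{n}/\Delta_n(Z)\bigr)=nt+s-2$; quotienting by the diagonal centre is precisely what recovers the lost $1$.

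On the lower-bound side, your hope that $k(\alpha)\leq 1$ is false. For any nontrivial $k\in\kr(\alpha)$ the centralizer $C(k)$ contains $\langle k\rangle\times Z$: since $s\geq 2$ the full twist has nontrivial image under $\alpha$, so $Z\cap\kr(\alpha)=\{1\}$, and torsion-freeness of $P_{s+t}$ makes $\langle k\rangle\times Z$ free abelian of rank $2$. Taking $k_1=\dots=k_{n-1}=k$ shows $k(\alpha)\geq\cd(C(k))\geq 2$, so \Cref{thm: lb-subgp-incl} yields at best $\TC_n[\alpha]\geq nt+s-3$, one short of the target (and two below the crude upper bound $nt+s-1$, with the true value strictly in between). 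The paper instead gets the sharp lower bound from \Cref{cor: cd-lb} (a consequence of \Cref{thm: lbtcnptc}) applied to $A_1=A$, the free abelian group of rank $s+t-1$ generated by the braids $\phi_j$, $A_2=B=\psi(P_{s+t-1})$, and $A_i=P_{s+t}$ for $3\leq i\leq n$: the linking-number fact $hAh^{-1}\cap B=\{1\}$ for all $h\in P_{s+t}$ from \cite{lbTCaspherical} verifies the hypothesis, and iterated duality-group extensions via \Cref{lemms: cd}, starting from $\cd(A\times_{P_s}B)=2t+s-2$ as in \cite{GrantPTC}, give $\cd\bigl({}_{P_s}\prod_{i=1}^{n}A_i\bigr)=nt+s-2$. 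To salvage your route you would need a replacement for \Cref{thm: lb-subgp-incl} whose defect term is insensitive to the $\Z^2$ sitting inside every centralizer; as stated, that theorem cannot reach $nt+s-2$.
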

It can be observe that the configuration space $F(\R^2,k)$ can be thought of as a complement of the braid arrangement in $\C^k$. It is well known that such space is always aspherical. Therefore, Fadell-Neuwirth fibrations are realizations of corresponding induced homomorphism on the fundamental groups. 
This allows us to apply results in this paper to give an alternate proof of \Cref{thm: sptcFadNeu}. Recall that the bounds presented in this paper involves the cohomological dimensions of certain subgroups. Therefore, we now provide some well known tools to compute these algebraic invariants.

Bieri and Eckmann, as demonstrated in \cite{cdlemma}, have shown that if duality groups form a group extension, then the cohomological dimension of a middle group can be described in terms of the cohomological dimensions of the other two groups. Before we state this result, let us recall the definition of a duality group.

\begin{definition}
A group $G$ is called a duality group of dimension $n$ if there exists a $\Z G$-module $M$ and an element $x\in H_n(G; M)$ such that for any $\Z G$-module $B$, the map induced by cap product with $-\cap x$ is an  isomorphism $H^k(G;B)\to H_{n-k}(G;B\otimes M)$ for all $k$.
\end{definition}

The lemma provided proves to be valuable in computing the cohomological dimensions of groups.
\begin{lemma}[{\cite{cdlemma}}\label{lemms: cd}]\
\begin{enumerate}
\item Suppose $K$ is an $n$-dimensional duality group and $H$ is an $m$-dimensional group such that we have a following short exact sequence of groups
\[\begin{tikzcd}
1 \arrow{r}& K\arrow{r} &G\arrow{r}& H \arrow{r}&1. \end{tikzcd}\]
Then $G$ is an $n+m$ dimensional duality group.

\item Let $G$ be a non-trivial free group. Then $G$ is an $1$-dimensional duality group.
\end{enumerate}
\end{lemma}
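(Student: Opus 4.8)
The plan is to prove both parts via the Bieri--Eckmann homological characterization of duality groups, rather than by verifying the cap-product isomorphism of the definition directly. Recall that a group $G$ of type $FP$ is a duality group of dimension $d$ exactly when $H^k(G;\Z G)=0$ for all $k\neq d$ and $H^d(G;\Z G)$ is torsion-free as an abelian group; in that case the dualizing module is $D_G:=H^d(G;\Z G)$. The whole task is then to verify these cohomological conditions for $G$ in part (1) and for a non-trivial free group in part (2). I read ``$m$-dimensional group'' in part (1) as ``$m$-dimensional duality group'', since the conclusion genuinely requires $H$ to be a duality group and not merely to satisfy $\cd H=m$.

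For part (1) I would first record that $G$ is of type $FP$, as it is an extension of the type-$FP$ groups $K$ and $H$. The heart of the argument is the analysis of $H^{\ast}(G;\Z G)$ through the Lyndon--Hochschild--Serre spectral sequence
\[ E_2^{p,q}=H^p\big(H;H^q(K;\Z G)\big)\Rightarrow H^{p+q}(G;\Z G). \]
The key simplification is that $\Z G$ is free as a $\Z K$-module, so, $K$ being of type $FP$, one has $H^q(K;\Z G)\cong\bigoplus H^q(K;\Z K)$, which vanishes for $q\neq n$ because $K$ is an $n$-dimensional duality group. Hence the spectral sequence collapses onto the single row $q=n$ and gives $H^{p+n}(G;\Z G)\cong H^p\big(H;H^n(K;\Z G)\big)$. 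Using that $H$ is an $m$-dimensional duality group then forces these groups to vanish except when $p=m$ and identifies $H^{n+m}(G;\Z G)\cong D_H\otimes_{\Z}D_K$. Torsion-freeness of this dualizing module is automatic, since the tensor product over $\Z$ of two torsion-free abelian groups is torsion-free. Therefore $G$ is a duality group of dimension $n+m$.

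For part (2), let $F$ be a non-trivial free group. Since $F$ acts freely on a contractible $1$-dimensional tree, $\cd F=1$, and $F$ is of type $FP$ via the two-term resolution $0\to(\Z F)^{r}\to\Z F\to\Z\to 0$ coming from the Fox derivatives $\partial(e_i)=x_i-1$. Applying $\mathrm{Hom}_{\Z F}(-,\Z F)$ and taking cohomology yields $H^0(F;\Z F)=0$, since the infinite group $F$ has no nonzero invariants, while $H^1(F;\Z F)$ is the cokernel of the transposed boundary map, which one checks to be $\Z$-free. These are precisely the conditions identifying $F$ as a $1$-dimensional duality group.

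The step I expect to demand the most care is the middle computation in part (1): justifying that the cohomology of $K$ commutes with the direct-sum decomposition of $\Z G$ as a $\Z K$-module (this is exactly where the type-$FP$ hypothesis on $K$ is used), and then tracking the induced $\Z H$-module structure on $H^n(K;\Z G)$ accurately enough to apply duality for $H$ and to recover $D_H\otimes_{\Z}D_K$ as the dualizing module. The remaining ingredients---type $FP$ of the extension, torsion-freeness through the tensor product, and the explicit free-group calculation---are routine once this identification is secured.
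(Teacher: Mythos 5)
The paper contains no proof of this lemma to compare against: it is quoted, with attribution, from Bieri--Eckmann \cite{cdlemma}, and the body of the paper only \emph{uses} it (in \cref{sec: sptcFNfibrations}). Your proposal is in substance the original Bieri--Eckmann argument: their route to part (1) is exactly the homological characterization you invoke (type $FP$, $H^k(G;\Z G)=0$ for $k\neq d$, $H^d(G;\Z G)$ torsion-free), combined with the Lyndon--Hochschild--Serre spectral sequence $E_2^{p,q}=H^p(H;H^q(K;\Z G))$, the freeness of $\Z G$ over $\Z K$, and the collapse onto the row $q=n$. You have also correctly located the delicate step: after untwisting the diagonal action on $H^n(K;\Z G)\cong \Z H\otimes_{\Z} D_K$, one needs $H^p(H;\Z H\otimes A)\cong H^p(H;\Z H)\otimes A$ for the (generally infinitely generated) abelian group $A=D_K$, and this is where the $\Z$-torsion-freeness of $D_K$ earns its keep: a torsion-free abelian group is a filtered colimit of finitely generated free ones, and type $FP$ of $H$ lets cohomology pass through the colimit. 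Your part (2) is likewise the standard computation, $H^0(F;\Z F)=0$ for $F$ infinite and $H^1(F;\Z F)$ the $\Z$-free cokernel of the Fox-derivative resolution.

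Two readings you adopt deserve to be made explicit, since both are genuinely forced. First, interpreting ``$m$-dimensional group'' as ``$m$-dimensional \emph{duality} group'' is not merely charitable but necessary: for the trivial extension $G=\Z\times H$ with $\mathrm{cd}(H)=m$ but $H$ not a duality group, $H^k(G;\Z G)\cong H^{k-1}(H;\Z H)$ fails the one-degree-concentration test, so the literal statement is false; this is also how the hypothesis reads in \cite{cdlemma}, and in the paper's application the quotients are iterated extensions of finitely generated free groups, hence duality groups. Second, your resolution $0\to(\Z F)^r\to\Z F\to\Z\to 0$ tacitly assumes $F$ has finite rank, and that too is forced: an infinitely generated free group is not of type $FP$ (it is not even finitely generated) and so is not a duality group under the characterization you use, so part (2) should be read for finitely generated non-trivial free groups --- which is again all the paper needs, as the kernels $\bar{P}_{t,s}$ are iterated extensions of finitely generated free groups. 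With these two readings made explicit, your outline is correct and complete at the level of detail one would expect, and it coincides with the cited source's proof rather than offering a different route.
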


Let $P_k = \pi_1(F (\R^2, k))$ represent the pure braid group on $k$ strands. Then, the map $p : F (\R^2, s + t) \to F (\R^2, s)$ realizes the epimorphism $\alpha: P_{s+t} \to P_s$, effectively removing the last $t$ strands.
Consider $p : F(\R^2, s + t) \to F(\R^2, s)$ as the Fadell-Neuwirth fibration with fiber $F(\R^2_{s, t},t)$, where $\R^2_{s,t} := \R^2\setminus \{1, 2,\dots, s\}$.
The kernel of $\alpha$ is $\bar{P}_{s,t} := \pi_1(F (\R^2_{s, t},t))$, known as the $t$-strand braid group of the $s$-th punctured plane. All these groups are iterated semi-direct products of free groups \cite{CohenSuciu}, and therefore are duality groups with a dimension equal to the number of free factors. In particular, $\mathrm{cd}(P_k) = k -1$ and $\mathrm{cd}(\bar{P}_{t,s}) =t$.

\begin{proof}[Proof of \Cref{thm: sptcFadNeu}]
We recall from \cite[Proposition 3.3]{lbTCaspherical} two subgroups $P_{s+t}$.
For $1\leq j\leq s+t-1$, consider a braid $\phi_j$ which pass the $j$-th strand over and behind the last $s+t-j$ strands before returning to their initial position.
Consider the subgroup $A$ generated by the braid $\phi_j$ for $1\leq j\leq s+t-1$. 
Note that $A$ is the free abelian group of rank $s+t-1$.
For any pure braid on $s+t-1$ strands we can create the braid on $s+t$ strands by fixing the last stand. Suppose $\psi:P_{s+t-1}\to P_{s+t}$ is this embedding. Define $B$ as the image of $\psi$.
(For more details we refer the reader to \cite{GrantPTC}).

Consider the following pullback diagram of an extension 
\[\begin{tikzcd}
1 \arrow{r}& \bar{P}_{t,s}\arrow{r} &P_{s+t}\arrow{r}& P_s \arrow{r}&1 \end{tikzcd}\] 
along the  map $\alpha_A\circ pr_A:A\times_{P_m}B\to P_s$.
\[\begin{tikzcd}
    1 \arrow{r} & \bar{P}_{t,s} \arrow{r}{} \arrow[dr, dotted] & P_{s+t} \arrow{r}{\alpha} & P_s \arrow{r} & 1\\%
    & & (A\times_{P_s}B)\times_{P_s} P_{s+t} \arrow{r} \arrow{u}{} & A\times_{P_m} B\arrow{r}  \arrow{u}{\alpha_A\circ pr_A}&1.
\end{tikzcd}\]
 Note that the dotted arrow exists because of the universal property of fibre products. 
 It was shown in \cite[Theorem 6.2]{GrantPTC} that $\mathrm{cd}(A\times_{P_s}B)=2t+s-2$. Therefore,
 using \Cref{lemms: cd} we get that 
 \[\mathrm{cd}((A\times_{P_s}B)\times_{P_s} P_{s+t})=t+2t+s-2=3t+s-2.\]

Recall that the iterated fibre product of $n-2$ many same maps $\alpha:P_{s+t}\to P_s$  is given by $P_{s+t}\times_{P_s}\dots\times_{P_s} P_{s+t}=_{P_s}P_{s+t}^{n-2}$.
Therefore, we can iterate the process of getting pullback extensions to get the following extension 
\[\begin{tikzcd}
1 \arrow{r}& \bar{P}_{t,s}\arrow{r} &(A\times_{P_s} B)\times_{P_s} (_{P_s}P_{s+t}^{n-2}) \arrow{r}& (A\times_{P_s} B)\times_{P_s} (_{P_s}P_{s+t}^{n-3})  \arrow{r}&1. 
\end{tikzcd}\] 
Thus by induction we get 
 \[\mathrm{cd}((A\times_{P_s} B)\times_{P_s} (_{P_s}P_{s+t}^{n-2}))=t+(n-1)t+s-2=nt+s-2.\]
One can observe that, if we denote $A_1=A$, $A_2=B$ and $A_i=P_{s+t}$ for $3\leq i\leq n$, then in our notation $(A\times_{P_s} B)\times_{P_s} (_{P_s}P_{s+t}^{n-2})= _{P_s}\prod_{i=1}^nA_i$.
Utilizing linking numbers, it is shown in \cite{lbTCaspherical} that $hAh^{-1}\cap B=\{1\}$ holds for all $h \in P_{s+t}$.
Consequently, one can check that the subgroups $A_i$ for $1\leq i\leq n$ satisfies the condition of \Cref{cor: cd-lb}. Thus, we obtain the  following inequality \[\TC_n[p : F (\R^2, s + t) \to F (\R^2, s)]\geq nt+s-2.\]

Now it remains to show that the upper bound also coincides with the $nt+s-2$.
The idea to get this is similar to above. 
Again using induction we can iteratively pullback an extension 
\[\begin{tikzcd}
1 \arrow{r}& \bar{P}_{t,s}\arrow{r} &P_{s+t}\arrow{r}& P_s \arrow{r}&1 \end{tikzcd}\] 
along $\alpha: P_{s+t}\to P_s$ to get the following extension
\[\begin{tikzcd}
1 \arrow{r}& \bar{P}_{t,s}\arrow{r} &_{P_s}P_{s+t}^{n}\arrow{r}& _{P_s}P_{s+t}^{n-1} \arrow{r}&1. \end{tikzcd}\]
Let $Z$ be the centre of $P_{s+t}$. Then taking quotient by $\Delta_n(Z)$ we get  the following extension
\[\begin{tikzcd}
1 \arrow{r}& \bar{P}_{t,s}\arrow{r} &_{P_s}P_{s+t}^{n}/\Delta_n(Z)\arrow{r}& _{P_s}P_{s+t}^{n-1}/\Delta_n(Z) \arrow{r}&1. \end{tikzcd}\]
Now we can use \Cref{lemms: cd} and induction to get 
\[\mathrm{cd}(_{P_s}P_{s+t}^{n}/\Delta_n(Z))=t+(n-1)t+s-2=nt+s-2.\]
Therefore, using \Cref{thm: sptcub} we get \[\TC_n[p : F (\R^2, s + t) \to F (\R^2, s)]\leq nt+s-2.\]
This completes the proof.
\end{proof}

\vspace{1cm}

\noindent\textbf{Acknowledgement:}
The author sincerely thanks the reviewer for the numerous insightful suggestions, comments, and feedback that significantly improved the exposition and presentation of this article.
The author expresses gratitude to Tejas Kalelkar for  insightful discussions and Rekha Santhanam for her feedback related to the exposition of this article. Author also thank Anurag Singh for his help in correcting Latex related issues. Additionally, the author acknowledges the support of National Board of Higher Mathematics (NBHM) through grant 0204/10/(16)/2023/R\&D-II/2789.

\bibliographystyle{plain} 
\bibliography{references}

\end{document}